\theoremstyle{plain}
\newtheorem{theorem}{Theorem}[section]
\newtheorem{proposition}[theorem]{Proposition}
\newtheorem{corollary}[theorem]{Corollary}
\newtheorem{lemma}[theorem]{Lemma}
\newtheorem*{Theorem}{Theorem}
\newtheorem*{conjecture*}{Conjecture}
\theoremstyle{definition}
\newtheorem{definition}[theorem]{Definition}
\newtheorem*{definition*}{Definition}
\newtheorem{example}[theorem]{Example}
\newtheorem*{example*}{Example}
\newtheorem*{notation*}{Notation}
\newtheorem*{notation-conv*}{Notation and convention}
\newtheorem*{convention*}{Convention}
\theoremstyle{remark}
\newtheorem{remark}[theorem]{Remark}
\newtheorem*{remark*}{Remark}
\newcommand{\N}{\mathbb{N}}
\newcommand{\Z}{\mathbb{Z}}
\newcommand{\C}{\mathbb{C}}
\newcommand{\SL}[1][2]{\mathrm{SL}_{#1}(\C)}
\newcommand{\trace}{{\rm tr}\,}
\newcommand{\I}{\mathbf{1}}
\newcommand{\bm}[1]{\mbox{\boldmath{$#1$}}}
\newcommand{\bnd}[1]{\partial_{#1}}
\newcommand{\im}{\mathop{\mathrm{Im}}\nolimits}
\newcommand{\bd}{\partial}
\newcommand{\lk}{\mathop{\mathrm{\ell k}}\nolimits}
\newcommand{\Tor}[2]{\mathop{\mathrm{Tor}}\nolimits (#1;#2)}
\newcommand{\ie}{i.e.,\,}
\begin{document}


\title[]{
  The asymptotics of the higher dimensional Reidemeister torsion for exceptional surgeries
  along twist knots
}

\author{Anh T.~Tran \and Yoshikazu Yamaguchi}

\address{Department of Mathematical Sciences, 
  The University of Texas at Dallas, 
  Richardson, TX 75080, USA}
\email{att140830@utdallas.edu}

\address{Department of Mathematics,
  Akita University,
  1-1 Tegata-Gakuenmachi, Akita, 010-8502, Japan}
\email{shouji@math.akita-u.ac.jp}


\keywords{Reidemeister torsion, graph manifold, asymptotic behavior, exceptional surgery}
\subjclass[2010]{57M27, 57M50}

\begin{abstract}
  We determine the asymptotic behavior of the higher dimensional Reidemeister torsion for
  the graph manifolds obtained by exceptional surgeries along twist knots.
  We show that all irreducible $\SL$-representations of the graph manifold
  are induced by irreducible metabelian representations of the twist knot group.
  We also give the set of the limits of the leading coefficients
  in the higher dimensional Reidemeister torsion explicitly.
\end{abstract}


\maketitle

\section{Introduction}
The purpose of this paper is to observe the asymptotic behavior of
the higher dimensional Reidemeister torsion for graph manifolds.
In particular, we are interested in graph manifolds
whose $\SL$-representations of the fundamental groups are described by
certain subsets of the $\SL$-representations of hyperbolic knot groups.

A closed orientable irreducible 3-manifold $M$ is called a graph manifold
if there exists disjoint incompressible tori $T^2_1, \ldots, T^2_k$ in $M$
such that each component of $M \setminus (T^2_1 \cup \ldots \cup T^2_k)$
is a Seifert fibered space and the whole space $M$ does not admit any Seifert fibration.
It has been shown in~\cite{Yamaguchi:asymptoticsRtorsion} that
the higher dimensional Reidemeister torsion for a Seifert fibered space 
grows exponentially and its logarithm has 
the same order as the dimension of representations.
It is natural to expect that 
we have the same growth order in the case of a graph manifold. 
In this paper, 
we determine the growth order and the limit of the leading coefficient in the sequence given by the logarithm of
the higher dimensional Reidemeister torsion for certain graph manifolds. 
We will see the difference
in the limit of the leading coefficient between our graph manifolds
and the Seifert fibered spaces studied in~\cite{Yamaguchi:asymptoticsRtorsion}. 

In the study of exceptional surgeries along a hyperbolic knot, the problem of finding incompressible tori that cut the resulting manifold into Seifert fibered spaces has been investigated.
For example there exists a complete list~\cite{BrittenhamWu}
of exceptional surgeries along two--bridge knots. 
The torus decomposition of the resulting graph manifolds
is also given in~\cite{Patton, ClayTeragaito, teragaito13:LO_TwistKnots}.

When a manifold is obtained by a surgery along a knot,
its fundamental group is given by a quotient group of the knot group.
Therefore we can pull--back $\SL$-representations from the fundamental group of the resulting manifold
to the knot group (for the details, see Section~\ref{sec:representations_twist_knot}).
The $\SL$-representation space of a hyperbolic knot group
can be regarded as a parameter space for deformations of the hyperbolic structure
of the knot exterior.
Since exceptional surgeries along a hyperbolic knot yield non--hyperbolic manifolds,
the resulting manifolds induce $\SL$-representations of the hyperbolic knot group
which correspond to degenerate hyperbolic structures.
We are also motivated to see the asymptotic behavior
of the higher dimensional Reidemeister torsion 
when we choose an $\SL$-representation for a hyperbolic $3$-manifold
which is different from the holonomy representation. Here the holonomy representation is an $\SL$-representation 
corresponding to the complete hyperbolic structure.
We wish to investigate the asymptotic behavior of the higher dimensional Reidemeister torsion
for degenerate hyperbolic structures through the $\SL$-representations induced by
an exceptional surgery.

For our purpose, 
we choose hyperbolic twist knots (see Fig.~\ref{fig:twistknot}) with $4$-surgeries.
According to the torus decomposition in~\cite{Patton},
in the set of exceptional surgeries along two--bridge knots,
only $4$-surgeries along hyperbolic twist knots yield
graph manifolds consisting of two Seifert fibered spaces
which include a torus knot exterior.
More precisely, $4$-surgery along 
a twist knot $K_n$ illustrated in Fig.~\ref{fig:twistknot}
yields the graph manifold $M$ consisting of the torus knot exterior of type $(2, 2n+1)$,
which will be denoted by $T(2, 2n+1)$, and the twisted $I$-bundle over the Klein bottle.
We consider the asymptotic behavior of the higher dimensional Reidemeister torsion for $M$.
When we choose a homomorphism $\bar\rho$
from $\pi_1(M)$ into $\SL$, we also have a sequence of homomorphisms $\sigma_{2N} \circ \bar\rho$
from $\pi_1(M)$ into $\SL[2N]$ by the composition with the irreducible representations 
$\sigma_{2N}$ of $\SL$ into $\SL[2N]$.
Our main theorem is stated as follows.
\begin{Theorem}[Theorem~\ref{thm:main_theorem} and Corollary~\ref{cor:set_limits}]
  The growth of $\log |\Tor{M}{\sigma_{2N} \circ \bar\rho}|$
  has the same order as $2N$ for every irreducible $\SL$-representation $\bar\rho$ of
  $\pi_1(M)$.
  The limits of the leading coefficients are expressed as
  \begin{align*}
    &\left\{\left.
    \lim_{N \to \infty} \frac{\log |\Tor{M}{\sigma_{2N} \circ \bar\rho}|}{2N} \,\right|\,
    \hbox{$\bar\rho$ is irreducible}\right\}\\
    &=
    \left\{\left.
    \frac{1}{p_k} (\log|\Delta_{T(2, 2n+1)}(-1)| - \log 2) \,\right|\,
    p_k > 1, \hbox{$p_k$ is a divisor of $|\Delta_{K_n}(-1)|$}
    \right\}
  \end{align*}
  where $\Delta_K(t)$ is the Alexander polynomial of a knot $K$.

  In particular, the minimum in the limits of the leading coefficients is given by
  \[ \frac{1}{|\Delta_{K_n}(-1)|} (\log|\Delta_{T(2, 2n+1)}(-1)| - \log 2).\]
\end{Theorem}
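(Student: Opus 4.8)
The plan is to combine a representation-theoretic description of $\pi_1(M)$ with the multiplicativity of Reidemeister torsion along the decomposing torus, and then to invoke the known asymptotics for the Seifert pieces. First I would classify the irreducible $\SL$-representations $\bar\rho$ of $\pi_1(M)$. Since $M$ is obtained by $4$-surgery along $K_n$, every such $\bar\rho$ pulls back to an $\SL$-representation of the twist knot group that kills the surgery slope; the assertion recorded in the abstract is that these are exactly the irreducible metabelian representations, and that the latter are parametrized by the divisors $p_k > 1$ of $|\Delta_{K_n}(-1)|$. This classification both produces the indexing set on the right-hand side and records how $\bar\rho$ restricts to each JSJ piece.

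Next, writing the torus decomposition as $M = E \cup_{T^2} W$, where $E$ is the torus knot exterior $T(2,2n+1)$ and $W$ is the twisted $I$-bundle over the Klein bottle, I would apply the Multiplicativity Lemma to obtain
\[
  \Tor{M}{\sigma_{2N}\circ\bar\rho}
  = \frac{\Tor{E}{(\sigma_{2N}\circ\bar\rho)|_E}\cdot\Tor{W}{(\sigma_{2N}\circ\bar\rho)|_W}}{\Tor{T^2}{(\sigma_{2N}\circ\bar\rho)|_{T^2}}},
\]
after checking the acyclicity of the three restricted representations. Taking logarithms turns the problem into an additive one, so I would estimate the three factors separately.

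The dominant factor comes from the Seifert fibered space $E$, for which \cite{Yamaguchi:asymptoticsRtorsion} gives linear growth of $\log|\Tor{E}{(\sigma_{2N}\circ\bar\rho)|_E}|$ in $2N$ with leading coefficient governed by the Alexander polynomial of the torus knot evaluated at the relevant root of unity. Because the surgery/metabelian condition forces the meridian to act with order-two behaviour, this specialization occurs at $t=-1$, producing $\tfrac{1}{p_k}\log|\Delta_{T(2,2n+1)}(-1)|$, where the divisor $p_k$ appears through the number of invariant blocks into which $\sigma_{2N}$ splits when restricted to the cyclic image of the regular fiber. The twisted $I$-bundle $W$ contributes, by the same branching mechanism, the term $-\tfrac{1}{p_k}\log 2$, while the torus factor $T^2$ is subexponential and drops out in the limit. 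Assembling the three pieces yields the leading coefficient $\tfrac{1}{p_k}(\log|\Delta_{T(2,2n+1)}(-1)| - \log 2)$, and letting $p_k$ range over the admissible divisors gives the full set. Finally, since $|\Delta_{T(2,2n+1)}(-1)| = 2n+1 > 2$, the coefficient is positive and strictly decreasing in $p_k$, so its minimum is attained at the largest divisor $p_k = |\Delta_{K_n}(-1)|$, giving the displayed formula.

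The main obstacle will be the precise analysis of the $W$ and $T^2$ factors together with the branching of $\sigma_{2N}$ over the Seifert fibers. One must show that these produce exactly the constant $-\log 2$ scaled by $1/p_k$, rather than merely a bounded error, and that the number of invariant subspaces of the restricted representation equals the divisor $p_k$. This requires carefully tracking the eigenvalues of the images of the fiber and base elements under $\sigma_{2N}\circ\bar\rho$ and verifying acyclicity on each piece, which is where the metabelian structure of $\bar\rho$ must be used in an essential way.
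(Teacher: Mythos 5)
Your overall skeleton --- classify the irreducible representations of $\pi_1(M)$ as those induced by irreducible metabelian representations of the twist knot group, apply multiplicativity along the JSJ torus, and compute the asymptotics piece by piece --- is indeed the paper's approach. But your execution of the piecewise analysis contains a genuine error: you attribute the term $-\tfrac{1}{p_k}\log 2$ to the twisted $I$-bundle $W = N(Kb)$. In the paper this piece contributes nothing at all: the restriction of $\bar\rho_k$ to $\pi_1(N(Kb))$ is the irreducible case of Example~\ref{example:Kb}, the eigenvalues of $\sigma_{2N}\circ\bar\rho_k(y)$ are $\xi_k^{\mp(2i-1)}$, and none of these equals $1$ because $\xi_k^{\pm 1}$ have even order $2p_k$; hence $\Tor{N(Kb)}{\sigma_{2N}\circ\bar\rho_k} = \det(\I-Y)/\det(Y-\I) = 1$ for every $N$ (Lemma~\ref{lemma:RtorionTorus_Kb_2N}). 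The $-\tfrac{1}{p_k}\log 2$ actually comes from the other piece: the restriction of $\bar\rho_k$ to $\pi_1(E_{T(2,2n+1)})$ is \emph{abelian} (Proposition~\ref{prop:torusknot_abelian}), so its torsion is given by the classical abelian (Alexander polynomial) formula
\[
\Tor{E_{T(2,2n+1)}}{\sigma_{2N}\circ\bar\rho_k}
= \prod_{i=1}^{N}\frac{\Delta_{T(2,2n+1)}(\xi_k^{2i-1})\,\Delta_{T(2,2n+1)}(\xi_k^{-2i+1})}{(\xi_k^{2i-1}-1)(\xi_k^{-2i+1}-1)},
\]
and it is the denominators $(\xi_k^{2i-1}-1)(\xi_k^{-2i+1}-1)$ whose Ces\`aro average produces $-(\log 2)/p_k$.

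Two related points in your sketch would also fail as stated. First, because the restriction to $E_{T(2,2n+1)}$ is abelian rather than irreducible, the Seifert-fibered asymptotics of~\cite{Yamaguchi:asymptoticsRtorsion} (whose limits have the form $(1-1/2-1/q')\log 2$) do not apply to this piece; only auxiliary limit lemmas from that paper enter the computation. Second, the meridian $\mu$ of the torus knot does not exhibit ``order-two behaviour'': $\bar\rho_k(\mu)$ has order $2p_k$, its eigenvalues $\xi_k^{\pm 1}$ being primitive $2p_k$-th roots of unity. The specialization at $t=-1$ arises from a cancellation, not from the order of the meridian: the averages over $i$ have period $p_k$, and since $\gcd(2p_k,2n+1)=1$ the product $\prod_{i\neq (p_k+1)/2}\bigl|\Delta_{T(2,2n+1)}(\xi_k^{2i-1})\bigr|$ equals $1$, leaving only the single factor $\bigl|\Delta_{T(2,2n+1)}(-1)\bigr|$. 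This is how $\tfrac{1}{p_k}\log|\Delta_{T(2,2n+1)}(-1)|$ appears, not through counting ``invariant blocks'' of the regular fiber. Your final step, locating the minimum at the largest divisor $p_k=|\Delta_{K_n}(-1)|$, is correct once the set of limits is established, but the set itself cannot be obtained along the route you propose without repairing the misattribution above.
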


We will show our main theorem by the following procedures.
First we will see that all irreducible $\SL$-representations $\bar\rho$ of $\pi_1(M)$
are induced by irreducible metabelian representations $\rho$ of a twist knot group $\pi_1(E_{K_n})$.
Here $E_{K_n}$ is the knot exterior of a twist knot $K_n$.
Concerning the decomposition of $M$ as the union of $E_{T(2, 2n+1)}$ and
the twist $I$-bundle $N(Kb)$ over the Klein bottle $Kb$, 
the restriction of $\bar\rho$ to $\pi_1(E_{T(2, 2n+1)})$ is abelian.
On the other hand, the restriction to $\pi_1(N(Kb))$ is irreducible.
We can also compute the Reidemeister torsion for $M$ and $\bar\rho$
by the product of the Reidemeister torsions for $E_{T(2, 2n+1)}$ and $N(Kb)$
in the JSJ decomposition of $M$.
We will obtain the limits of the leading coefficients in our main theorem from the observation about
the asymptotic behavior of the Reidemeister torsion for 
the torus knot exterior $E_{T(2, 2n+1)}$ and abelian representations
given by the restrictions of $\bar\rho$. We  remark that, since $|\Delta_K(-1)|$ is always odd,
  these limits differ from 
  the limit of the leading coefficient 
  for the exterior of the torus knot $T(2, 2n+1)$ and an {\it irreducible} $\SL$-representation
  in~\cite{Yamaguchi:asymptoticsRtorsion}, 
  which is given by 
  $(1-1/2-1/q')\log2$ with a divisor $q' ( > 1)$ of $2n+1$.
  The maximum of $(1-1/2-1/q')$ is equal to $-\chi$ where 
  $\chi$ is the Euler characteristic of
  the base orbifold in the Seifert fibration of the exterior $T(2, 2n+1)$.

From the viewpoint of hyperbolic structures, 
$4$-surgery along a hyperbolic twist knot yields
degenerate hyperbolic structures of the twist knot exterior.
In this paper, we see that such degenerate hyperbolic structures are given by
irreducible metabelian representations in the $\SL$-representation space of a twist knot group.
The above Theorem
(Theorem~\ref{thm:main_theorem} and Corollary~\ref{cor:set_limits}) 
and the results in~\cite{FerrerPorti:HigherDimReidemeister, porti:survey_Rtorsion}
imply that, in the case of a hyperbolic twist knot exterior, 
the growth order of the higher dimensional Reidemeister torsion  
for any irreducible metabelian representation 
decreases from that for the holonomy representation.
Note that the Reidemeister torsion under our convention 
is the inverse of that of~\cite{FerrerPorti:HigherDimReidemeister}
(for more details, see~\cite{porti:survey_Rtorsion}). 
We will observe that this degeneration occurs for any knot 
in the subsequent paper~\cite{tranYam:higherdimTAPmetabelian}.
In other words, we will observe that the growth order of
the higher dimensional Reidemeister torsion 
for any irreducible metabelian representation of a hyperbolic knot group
is less than that for the holonomy representation.

\section{Preliminaries}
\subsection{The higher dimensional Reidemeister torsion}
For the Reidemeister torsion,
we follow the notation and definition used in~\cite{Yamaguchi:asymptoticsRtorsion}.
For the details and related topics, we refer to the survey articles~\cite{Milnor:1966, porti:survey_Rtorsion} by
J.~Milnor and J.~Porti or the book~\cite{Turaev:2000} by V.~Turaev.
We need a homomorphism from the fundamental group into $\SL$ 
to observe the Reidemeister torsion for a manifold. 
Throughout this paper, a homomorphism from a group $H$ into a linear group $G$
will be referred to as {\it a $G$-representation} of $H$.
The symbol $\sigma_n$ denotes the right action of $\SL$ on the vector space
$V_{n}$, consisting of homogeneous polynomials $p(x, y)$ of degree $n-1$,
defined as 
\[
\sigma_n (A) \cdot p(x, y) = p(x', y')\quad \hbox{where}\,
\begin{pmatrix} x' \\ y' \end{pmatrix}
= A^{-1}\begin{pmatrix} x \\ y \end{pmatrix}.
\]
It is known that this action induces a homomorphism from $\SL$ into $\SL[n]$,
which is referred as {\it the $n$-dimensional irreducible representation of $\SL$}. 
For the simplicity,
we use the same symbol $\sigma_n$ for the $n$-dimensional irreducible representation of $\SL$.
We mainly use the $2N$-dimensional irreducible representation $\sigma_{2N}$.
If $A \in \SL$ has the eigenvalues $\xi^{\pm 1}$,
then $\sigma_{2N}(A)$ has the eigenvalues $\xi^{\pm 1}$, $\xi^{\pm 3}, \ldots, \xi^{\pm (2N-1)}$.
This is due to the following action
\[\sigma_{2N}(A) \cdot (x^{2N-1 - i}y^i) = \xi^{-2N+1+2i} (x^{2N-1 - i}y^i)
\quad\hbox{for}\quad
A=
\begin{pmatrix}
  \xi & 0 \\
  0 & \xi^{-1}
\end{pmatrix}
\]
on the standard basis $\{x^{2N-1}, x^{2N-2}y, \ldots, xy^{2N-2}, y^{2N-1}\}$ of $V_{2N}$.
\begin{definition}
  Let $W$ be a finite CW-complex and $\rho$ be an $\SL$-representation of $\pi_1(W)$.
  The twisted chain complex $C_*(W;V_n)$ with the coefficient $V_{n}$ is defined as
  a chain complex which consists of
  \[ C_i(W;V_n) = V_n \otimes_{\sigma_n \circ \rho} C_i(\widetilde W;\Z) \]
  where $\widetilde W$ is the universal cover of $W$
  and $C_i(\widetilde W;\Z)$ is a left $\Z[\pi_1(W)]$-module.
\end{definition}
We assume that each twisted chain module $C_*(W;V_n)$ is equipped with 
a basis $\bm{c}^i$ given by $v_j \otimes \tilde e^i_{j'}$ where $v_j$ is a vector in a basis of $V_n$
and $\tilde e^i_{j'}$ is a lift of an $i$-dimensional cell $e^i_{j'}$ in $W$.
\begin{definition}
  Suppose that  the twisted chain complex $C_*(W;V_n)$ is acyclic,
  \ie $\im \bnd{i} = \ker\bnd{i-1}$ for all $i$.
  Each chain module $C_i(W;V_n)$ has the following decomposition:
  \[C_i(W;V_n) = \bnd{i+1}\tilde B_{i+1} \oplus \tilde B_{i}\] 
  where $\tilde B_{i}$ is a lift of $\im \bnd{i}$.
  Then we will denote by $\Tor{W}{\sigma_{n} \circ \rho}$ the $n$-dimensional Reidemeister torsion for
  $W$ and $\rho$, which is given by the following alternating product:
  \begin{equation}
    \label{eqn:def_torsion}
    \prod_{i \geq 0} \det \left(\bnd{i+1}\tilde{\bm{b}}^{i+1} \cup \tilde{\bm{b}}^i / \bm{c}^i\right)^{(-1)^{i+1}}
  \end{equation}
  where $\tilde{\bm{b}}^i$ is a basis of $\tilde B_i$, $\bm{c}^i$ is an equipped basis of $C_i(W;V_n)$
  and $(\bnd{i+1}\tilde{\bm{b}}^{i+1} \cup \tilde{\bm{b}}^i / \bm{c}^i)$ is the base change matrix
  from $\bm{c}^i$ to $\bnd{i+1}\tilde{\bm{b}}^{i+1} \cup \tilde{\bm{b}}^i$.
\end{definition}

There are several choices in the definition of the $n$-dimensional Reidemeister torsion.
Let us mention the well-definedness of the Reidemeister torsion without proofs.
We refer to~\cite{porti:survey_Rtorsion, Yamaguchi:asymptoticsRtorsion}
for the details.
\begin{remark}
  The alternating product~\eqref{eqn:def_torsion} is independent of a choice of a lift of $\im \bnd{i}$.
  If the Euler characteristic of $W$ is zero, then $\Tor{W}{\sigma_{n} \circ \rho}$ is also independent
  of a choice of a basis of $V_n$.
  It is known that $\Tor{W}{\sigma_{n} \circ \rho}$ does not depend on the ordering and orientation
  of cells in $\bm{c}^i$
  when $n$ is even.
  This is a reason why we restrict our attention to $2N$-dimensional ones.
\end{remark}

We give an example of $2N$-dimensional Reidemeister torsion which will be needed in this paper.
\begin{figure}[ht]
  \centering
  \includegraphics[scale=.6]{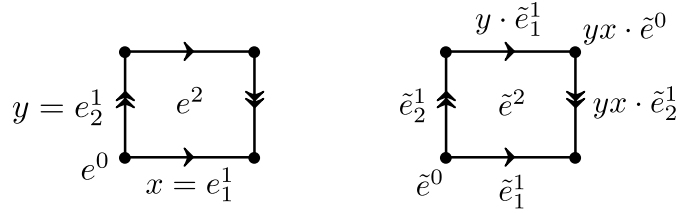}
  \caption{a cell decomposition of $Kb$ (left) and
    a lift to $\widetilde{Kb}$ (right)} 
  \label{fig:Kb}
\end{figure}
\begin{example}
  \label{example:Kb}
  Suppose that the Klein bottle $Kb$ is decomposed as in Fig.~\ref{fig:Kb} and
  $\rho$ is an $\SL$-representation of $\pi_1(Kb)$.
  The fundamental group has the presentation $\pi_1(Kb) = \langle x, y \,|\, yx = xy^{-1}\rangle$.
  The twisted chain complex $C_*(Kb;V_n)$ is expressed as
  \begin{gather*}
  0 \to
  C_2(Kb;V_n)=V_n \xrightarrow{\bnd{2}} C_1(Kb;V_n) = V_n \oplus V_n \xrightarrow{\bnd{1}} C_0(Kb;V_n)=V_n
  \to 0 \\
  \bnd{2} = \begin{pmatrix} \I - Y \\ - XY - \I \end{pmatrix}, \quad
  \bnd{1} = \begin{pmatrix} X - \I & Y - \I \end{pmatrix} 
  \end{gather*}
  where $X = \sigma_n \circ \rho (x)$ and $Y = \sigma_n \circ \rho (y)$.
  By the relation $x^{-1}yx=y^{-1}$,
  the $\SL$-representation $\rho$ is classified into the following three cases,
  up to conjugation:
  \begin{enumerate}
  \item \label{item:abelian}
    $\rho(y) = \pm \I$ and $\rho(x)$ is arbitrary,
  \item \label{item:irreducible}
    $\rho(y)
    = \begin{pmatrix}
      \eta & 0 \\
      0 & \eta^{-1}
    \end{pmatrix}\, (\eta \not = \pm1)$
    and
    $\rho(x)
    = \begin{pmatrix}
      0 & -1 \\
      1 & 0
    \end{pmatrix}$,
  \item \label{item:reducible_nonabelian}
    $\rho(y)
    = \begin{pmatrix}
      \pm 1 & \omega \\
      0 & \pm 1
    \end{pmatrix}\, (\omega \not = 0)$
    and
    $\rho(x)
    = \begin{pmatrix}
      \pm \sqrt{-1} & \omega' \\
      0 & \mp \sqrt{-1}
    \end{pmatrix}$.
  \end{enumerate}

  We can express the $2N$-dimensional Reidemeister torsion $\Tor{Kb}{\sigma_{2N} \circ \rho}$ as
  \begin{equation}
    \label{eqn:Rtorsion_Kb}
  \Tor{Kb}{\sigma_{2N} \circ \rho}
  =\begin{cases}
  \displaystyle \frac{\det(\I-Y)}{\det(Y-\I)} & (\det(Y-\I) \not = 0) \medskip \\ 
  \displaystyle \frac{\det(-XY-\I)}{\det(X-\I)} & (\det(Y-\I) = 0 ).
  \end{cases}
  \end{equation}
  Note that the left edge in Fig.~\ref{fig:Kb} is moved to the right one by the covering transformation of $yx$
  since the starting point of the left edge is moved to that of the right edge by $yx \in \pi_1(Kb)$. 
\end{example}

We will use the following gluing formula
of the $2N$-dimensional Reidemeister torsion.
This is an application of {\it the Multiplicativity property}
of the Reidemeister torsion to a torus decomposition of a $3$-manifold.
In the case of the $2N$-dimensional Reidemeister torsion, 
we can determine the sign in the gluing formula easily.
For the details on applying the Multiplicativity property to a decomposition along a torus,
we refer to~\cite[Subsection~2.3 and Section~3]{Yamaguchi:asymptoticsRtorsion}
and the references given there.
\begin{lemma}[Consequence of the Multiplicativity property for a decomposition along a torus]
  \label{lemma:mult_prop}
  Suppose that a compact $3$-manifold $M$ is the union $M_1 \cup_{T^2} M_2$ and
  each $M_i$ is given a CW-structure such that 
  both of them induce the same CW-structure of $T^2$.
  If an $\SL$-representation $\rho$ of $\pi_1(M)$
  induces the acyclic complexes $C_*(M_1;V_{2N})$, $C_*(M_2;V_{2N})$ and $C_*(T^2;V_{2N})$,
  then the twisted chain complex $C_*(M;V_{2N})$ defined by $\rho$
  is also acyclic and the $2N$-dimensional Reidemeister
  torsion $\Tor{M}{\sigma_{2N} \circ \rho}$ is expressed as
  \[
  \Tor{M}{\sigma_{2N} \circ \rho}
  = \Tor{M_1}{\sigma_{2N} \circ \rho}\Tor{M_2}{\sigma_{2N} \circ \rho}.
  \]
\end{lemma}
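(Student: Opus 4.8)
The plan is to derive the gluing formula from the Mayer--Vietoris short exact sequence of twisted chain complexes together with the Multiplicativity property of the Reidemeister torsion. First I would form the based short exact sequence
\[
0 \to C_*(T^2; V_{2N}) \xrightarrow{\alpha} C_*(M_1; V_{2N}) \oplus C_*(M_2; V_{2N}) \xrightarrow{\beta} C_*(M; V_{2N}) \to 0,
\]
where $\alpha$ is induced by the two inclusions $T^2 \hookrightarrow M_i$ and $\beta$ by the inclusions $M_i \hookrightarrow M$. The hypothesis that the CW-structures of $M_1$ and $M_2$ induce the same CW-structure on $T^2$ is exactly what makes the equipped bases $\bm{c}^i$ compatible with this sequence: each basis of $C_i(M;V_{2N})$ is, up to ordering and sign, the image under $\beta$ of the bases of $C_i(M_1;V_{2N})$ and $C_i(M_2;V_{2N})$ modulo the image of $\alpha$. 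This is the based compatibility required to apply the Multiplicativity property.

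Next I would establish the acyclicity of $C_*(M; V_{2N})$. This is immediate from the long exact homology sequence associated with the short exact sequence above: since $C_*(T^2;V_{2N})$, $C_*(M_1;V_{2N})$ and $C_*(M_2;V_{2N})$ are acyclic by hypothesis, every third term of the long exact sequence vanishes, and exactness forces $H_*(M;V_{2N}) = 0$. Hence the Multiplicativity property is applicable to a short exact sequence all of whose terms are acyclic, so the correction term coming from the long exact homology sequence is trivial.

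With this in hand, the Multiplicativity property reads
\[
\Tor{M_1}{\sigma_{2N} \circ \rho}\,\Tor{M_2}{\sigma_{2N} \circ \rho}
= \pm\, \Tor{T^2}{\sigma_{2N} \circ \rho}\,\Tor{M}{\sigma_{2N} \circ \rho},
\]
where I have used that the torsion of the direct sum $C_*(M_1;V_{2N}) \oplus C_*(M_2;V_{2N})$ is the product of the two torsions. It then remains to prove two things: that $\Tor{T^2}{\sigma_{2N} \circ \rho} = 1$ and that the sign is $+1$. For the torus factor I would use the product structure $T^2 = S^1 \times S^1$ together with the behaviour of the torsion under products, which introduces the factor $\chi(S^1) = 0$ in the exponent; hence the torus contributes $\Tor{T^2}{\sigma_{2N} \circ \rho} = 1$ whenever its twisted complex is acyclic.

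The sign is where the restriction to even-dimensional representations pays off, and I expect this to be the main technical point, as anticipated in the Remark above. Since $\dim_{\C} V_{2N} = 2N$ is even, every chain module $C_i(\,\cdot\,;V_{2N})$ has even dimension, so the exponents entering the sign term of the Multiplicativity property are all even and the ambiguity collapses to $+1$; this is consistent with the fact that, for even $n$, $\Tor{\,\cdot\,}{\sigma_n \circ \rho}$ is independent of the ordering and orientation of cells. Combining these facts yields $\Tor{M}{\sigma_{2N} \circ \rho} = \Tor{M_1}{\sigma_{2N} \circ \rho}\,\Tor{M_2}{\sigma_{2N} \circ \rho}$, as desired. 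For the precise bookkeeping of the sign and the product formula I would appeal to~\cite[Subsection~2.3 and Section~3]{Yamaguchi:asymptoticsRtorsion} and the references therein.
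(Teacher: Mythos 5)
Your proposal is correct and follows essentially the same route the paper takes: the paper presents this lemma as a consequence of the Multiplicativity property applied to the Mayer--Vietoris sequence of the torus decomposition, using $\Tor{T^2}{\sigma_{2N}\circ\rho}=1$ and the fact that the sign ambiguity disappears for even-dimensional coefficients, with the detailed bookkeeping deferred to the cited reference of Yamaguchi. Your fleshed-out version (acyclicity of $C_*(M;V_{2N})$ from the long exact sequence, sign $+1$ because every chain module has even dimension) is precisely that argument.
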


\begin{remark}
  Usually we have the equality that
  \[
  \Tor{M}{\sigma_{2N} \circ \rho}\Tor{T^2}{\sigma_{2N} \circ \rho}
  = \Tor{M_1}{\sigma_{2N} \circ \rho}\Tor{M_2}{\sigma_{2N} \circ \rho}
  \]
  as a consequence of the Multiplicativity property.
  It is known that $\Tor{T^2}{\sigma_{2N} \circ \rho} = 1$ if it is defined.
\end{remark}

\subsection{$\SL$-representations of twist knot groups}
\label{sec:representations_twist_knot}
We review several results concerning $\SL$-representations of the fundamental groups
of our graph manifolds.
We write $E_K$ for the knot exterior of a knot $K$,
which is obtained by removing an open tubular neighbourhood of $K$ from $S^3$.
We mainly consider the $n$-twist knot $K_n$, illustrated in Figure~\ref{fig:twistknot}.
The horizontal twists are right-handed if $n$ is positive, left-handed if $n$ is negative.
\begin{figure}[ht]
  \centering
  \includegraphics[scale=.6]{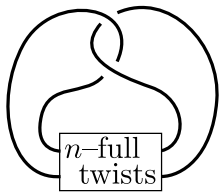}
  \caption{a diagram of $K_n$}
  \label{fig:twistknot}
\end{figure}
Under our convention, the $1$-twist knot $K_1$ is the figure-eight knot.

It is known that $K_n$ is a hyperbolic knot and that
$4$-surgery along $K_n$ yields a graph manifold $M$ when $n \not = 0, -1$.
The fundamental group $\pi_1(M)$ has the following presentation. 
\begin{proposition}[Proposition~$2.2$ in~\cite{teragaito13:LO_TwistKnots}]
  \label{prop:pi_1_M}
  The graph manifold $M$ consists of a torus knot exterior $E_{T(2, 2n+1)}$ and the twisted $I$-bundle over the Klein bottle.
  The fundamental group has a presentation:
  \begin{equation}
    \label{eqn:pres_pi_1_M}
  \pi_1(M)
  = \langle a, b, x, y\,|\,a^2 = b^{2n+1}, x^{-1} y x = y^{-1}, \mu = y^{-1}, h=y^{-1}x^2 \rangle
  \end{equation}
  where $\mu = b^{-n} a$ and $h$ correspond to
  a meridian and a regular fiber of the torus knot exterior (with the Seifert fibration),
  respectively.
\end{proposition}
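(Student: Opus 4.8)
The plan is to realize $\pi_1(M)$ as the amalgamated free product determined by the torus decomposition and then to make the gluing data explicit. By the surgery analysis recalled above (see~\cite{Patton, teragaito13:LO_TwistKnots}), $M = X_1 \cup_T X_2$ is the splitting along the essential torus $T$ created by the $4$-surgery, where $X_1 = E_{T(2, 2n+1)}$ is the torus knot exterior and $X_2 = N(Kb)$ is the twisted $I$-bundle over the Klein bottle. Each piece carries a standard presentation: $\pi_1(X_1) = \langle a, b \mid a^2 = b^{2n+1}\rangle$, in which $h = a^2 = b^{2n+1}$ generates the centre and represents the regular Seifert fibre, and $\pi_1(X_2) = \langle x, y \mid x^{-1}yx = y^{-1}\rangle$ is the Klein bottle group, in which $x^2$ generates the centre and represents the regular fibre of the Seifert fibration of $N(Kb)$ over the disc with two cone points of order two.

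First I would identify the peripheral subgroups on each side. On the torus knot side, $\pi_1(\partial X_1) \cong \Z^2$ is generated by the fibre $h$ and the meridian $\mu = b^{-n}a$; abelianizing $\pi_1(X_1)$ shows that $\mu$ maps to a generator of $H_1(X_1) \cong \Z$, so $\mu$ is a primitive meridional class, while $h \mapsto (4n+2)\mu$ records the Seifert framing $pq = 2(2n+1)$. On the Klein bottle side, $\partial X_2$ is the orientation double cover of $Kb$, so $\pi_1(\partial X_2) \cong \Z^2$ is the orientation-preserving index-two subgroup $\langle x^2, y\rangle$.

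Next I would apply the Seifert--van Kampen theorem along $T$. As $T$ is incompressible in both pieces, $\pi_1(M) = \pi_1(X_1) *_{\pi_1(T)} \pi_1(X_2)$, which amounts to adjoining to the two presentations above a pair of relations identifying the generators of $\pi_1(\partial X_1)$ with their images under the gluing homeomorphism $\phi \co \partial X_1 \to \partial X_2$. Hence it remains only to prove $\phi_*(\mu) = y^{-1}$ and $\phi_*(h) = y^{-1}x^2$, which are precisely the relations $\mu = y^{-1}$ and $h = y^{-1}x^2$ asserted in the presentation.

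The main obstacle is the computation of $\phi_*$, and this is where the coefficient $4$ of the surgery enters. I would track the meridian $\mu_{K_n}$ and the longitude $\lambda_{K_n}$ of the twist knot through the construction of $T$ and the Dehn filling, writing the filling slope $4\mu_{K_n} + \lambda_{K_n}$ together with the peripheral bases $\{\mu, h\}$ and $\{x^2, y\}$ in a single basis of $H_1(T)$ and matching them; the bookkeeping of these slope identifications, rather than any deep idea, is the delicate part. Two consistency checks guide the computation: $\{y^{-1}, y^{-1}x^2\}$ must again be a basis of $\langle x^2, y\rangle = \pi_1(\partial X_2)$, so that $\phi_*$ is an isomorphism of peripheral groups; and the regular fibres of the two pieces must be glued to distinct slopes on $T$ (indeed $\phi_*(h) = y^{-1}x^2$ is not proportional to the fibre $x^2$ of $X_2$), which is exactly the condition ensuring that the two Seifert fibrations do not combine into a global one and that $M$ is a genuine graph manifold.
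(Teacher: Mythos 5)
There is no internal proof in the paper to compare against: the statement is quoted verbatim, with attribution, as Proposition~2.2 of Teragaito's paper \cite{teragaito13:LO_TwistKnots}. Your proposal must therefore stand on its own, and as written it does not. Your reduction is correct and is surely the right framework: Seifert--van Kampen applied to the splitting $M = E_{T(2,2n+1)} \cup_{T} N(Kb)$, the standard presentations $\langle a,b \mid a^2 = b^{2n+1}\rangle$ and $\langle x,y \mid x^{-1}yx = y^{-1}\rangle$, the peripheral subgroups $\langle \mu, h\rangle$ and $\langle x^2, y\rangle$, and your homological checks (that $\mu$ generates $H_1(E_{T(2,2n+1)})$, that $h \mapsto (4n+2)\mu$, that $\{y^{-1}, y^{-1}x^2\}$ is a basis of $\langle x^2,y\rangle$) are all accurate.

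The gap is that the entire content of the proposition beyond these generalities is the pair of gluing relations $\mu = y^{-1}$ and $h = y^{-1}x^2$, and you never derive them: you state that you ``would track'' the meridian and longitude of $K_n$ through the filling and that the slope bookkeeping ``rather than any deep idea'' is the delicate part. That bookkeeping \emph{is} the proof. Carrying it out requires (a) using Teragaito's result that $K_n$ bounds a once-punctured Klein bottle of boundary slope $4$, so that the $4$-filling caps it to a Klein bottle whose neighborhood is $N(Kb)$; (b) identifying the complementary piece with the $(2,2n+1)$-torus-knot exterior --- a fact your proposal also imports from the references, although the proposition asserts it; and (c) expressing $\mu$ and $h$ on the splitting torus in the basis $\{x^2, y\}$, which depends on how the punctured Klein bottle and the Seifert fibration of its complement actually sit inside $E_{K_n}$. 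Your two consistency checks cannot substitute for this computation: many gluings pass both of them (for instance $\mu \mapsto y$, $h \mapsto yx^2$, or $\mu \mapsto x^2y$, $h \mapsto y$ --- each sends $\{\mu,h\}$ to a basis and sends $h$ to a class not proportional to $x^2$), and different gluing matrices in general produce different graph manifolds with different fundamental groups. So what you have is a correct reduction of the proposition to the determination of the gluing map, together with a plausible plan for determining it --- not a proof of the stated presentation.
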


Since $\pi_1(M)$ is isomorphic to the quotient group
$\pi_1(E_{K_n}) / \langle\!\langle m^4\ell \rangle\!\rangle$
where $m$ and $\ell$ are a meridian and a preferred longitude on $\bd E_{K_n}$,
Proposition~\ref{prop:pi_1_M} shows that the quotient
$\pi_1(E_{K_n}) / \langle\!\langle m^4\ell \rangle\!\rangle$
is expressed as~\eqref{eqn:pres_pi_1_M}.
We denote by $\bar \rho$ the induced homomorphism from $\pi_1(M)$ into $\SL$:
\[\xymatrix{
  \pi_1(E_{K_n}) \ar[r]^{\rho} \ar[d]& \SL \\
  \pi_1(M)\ar[ur]_{\bar\rho} &
}\]

\begin{definition}
  An $\SL$-representation $\rho$ of a group $H$ is referred as being {\it irreducible}
  if the invariant subspaces of $\C^2$ under the action of $\rho(H)$ are only $\{\bm{0}\}$ and $\C^2$.
  An $\SL$-representation $\rho$ is called {\it reducible} if it is not irreducible.
  We also call $\rho$ {\it abelian} if the image $\rho(H)$ is an abelian subgroup in $\SL$.
\end{definition}

\begin{remark}
  \label{rem:irred_rhobar}
  The image of $\pi_1(E_{K_n})$ by $\rho$ coincides with that of $\pi_1(M)$ by $\bar\rho$.
  Hence $\bar\rho$ is irreducible if and only if $\rho$ is irreducible.
\end{remark}

\begin{remark}
  We have seen the classification of $\SL$-representations of $\pi_1(Kb)$
  in Example~\ref{example:Kb}.
  The case~\eqref{item:abelian} gives abelian representations, 
  the case~\eqref{item:irreducible} gives irreducible ones and
  the case~\eqref{item:reducible_nonabelian} gives reducible and non--abelian ones.
\end{remark}

\begin{definition}
  We write $R(X)$ for the set of homomorphisms from $\pi_1(X)$ into $\SL$.
  We call $R(X)$ the $\SL$-representation space of $\pi_1(X)$.
  The symbol $R^{\mathrm{irr}}(X)$ denotes the subset of irreducible representations in $R(X)$.
\end{definition}
The pull-back by the quotient induces an inclusion from $R(M)$ into $R(E_{K_n})$.
We can regard the representation space $R(M)$ as a subset in $R(E_{K_n})$.
From this viewpoint, $R(M)$ is expressed as
\[
R(M) = \{\rho \in R(E_{K_n}) \,|\, \rho(m^4\ell) = \I \}.
\]

\begin{lemma}
  \label{lemma:rep_sp_M}
  Every irreducible metabelian representation of $\pi_1(E_{K_n})$ is contained in $R(M)$.
\end{lemma}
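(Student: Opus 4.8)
The plan is to use the defining description $R(M)=\{\rho\in R(E_{K_n})\mid \rho(m^{4}\ell)=\I\}$ recorded just above, so that proving the lemma amounts to showing $\rho(m^{4}\ell)=\I$ for every irreducible metabelian $\rho$. Since $m$ and $\ell$ commute in $\pi_1(\bd E_{K_n})$, we have $\rho(m^{4}\ell)=\rho(m)^{4}\rho(\ell)$, and I would establish the two factors separately: $\rho(\ell)=\I$ (using only that $\rho$ is metabelian) and $\rho(m)^{4}=\I$ (using irreducibility).

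First I would handle the longitude. Write $G=\pi_1(E_{K_n})$, $G'=[G,G]$ and $G''=[G',G']$. A representation is metabelian precisely when its image has trivial second commutator subgroup, equivalently when $\rho(G'')=\{\I\}$, so $\rho$ factors through $G/G''$. The key input is the classical fact that the preferred longitude lies in $G''$: a Seifert surface bounded by $\ell$ lifts to the infinite cyclic cover $\widetilde{E_{K_n}}\to E_{K_n}$ (which is assembled from copies of $E_{K_n}$ cut along that surface), and since $\ell\in G'$ it lifts to a closed loop bounding the lifted surface; hence $[\ell]=0$ in $H_1(\widetilde{E_{K_n}})=G'/G''$, i.e.\ $\ell\in G''$. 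Consequently $\rho(\ell)=\I$ for every metabelian $\rho$, with no appeal to irreducibility.

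Next I would pin down the meridian using irreducibility. Because $\rho(G'')=\{\I\}$, the image $\rho(G')$ is abelian. If $\rho(G')$ were trivial then $\rho$ would factor through $G^{\mathrm{ab}}=\Z$ and be abelian, contradicting irreducibility; and if $\rho(G')$ were central (contained in $\{\pm\I\}$) then $\rho(G)/\{\pm\I\}$ would be cyclic, forcing $\rho(G)$ abelian, again a contradiction. So $\rho(G')$ contains a non-central element. A nontrivial abelian subgroup of $\SL$ is, up to conjugacy, either diagonalizable or parabolic (sharing a unique invariant line); the parabolic possibility is excluded because $G'\trianglelefteq G$ forces $\rho(m)$ to preserve that unique line, making $\rho$ reducible. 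Thus, after conjugation, $\rho(G')$ sits in the diagonal torus $D$, and $\rho(m)$ normalizes $D$. If $\rho(m)\in D$ then $\rho(G)=\langle\rho(G'),\rho(m)\rangle\subseteq D$ is reducible; so $\rho(m)$ is anti-diagonal, whence $\rho(m)^{2}=-\I$ and $\rho(m)^{4}=\I$. In particular $\rho(m)$ is trace-free, in line with the irreducible case~\eqref{item:irreducible} on the Klein-bottle side of $M$.

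Combining the two computations gives $\rho(m^{4}\ell)=\rho(m)^{4}\rho(\ell)=\I\cdot\I=\I$, so $\rho\in R(M)$. The main obstacle is the structural analysis of $\rho(G')$: one must carefully rule out the parabolic and central degeneracies before concluding that the meridian is anti-diagonal, and one must check that the longitude-in-$G''$ argument applies verbatim to the twist-knot exterior. Alternatively, if an explicit parametrization of the irreducible metabelian representations of the twist knots $K_n$ is available, one can simply substitute the explicit images of $m$ and $\ell$ and verify $\rho(m^{4}\ell)=\I$ directly; I would keep that as a computational backup.
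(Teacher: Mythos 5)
Your proof is correct, and it establishes exactly the two facts on which the paper's proof rests --- but where the paper's entire proof is a one-line citation of~\cite[Proposition~1.1]{Nagasato07:Finite_of_section} (an irreducible metabelian representation of a knot group sends the preferred longitude to $\I$ and a meridian to a trace-free matrix, hence a matrix of order $4$), you prove both facts from scratch. Your longitude step (the preferred longitude lies in $G''$ because it lifts to a loop bounding a lifted Seifert surface in the infinite cyclic cover, so \emph{any} metabelian representation kills it, with no appeal to irreducibility) and your meridian step (irreducibility plus normality of $G'$ rule out the trivial, central and parabolic cases, putting $\rho(G')$ in a maximal torus with $\rho(m)$ acting as the Weyl involution, hence anti-diagonal with $\rho(m)^2=-\I$ and $\rho(m)^4=\I$) are both standard and complete; together they amount to a self-contained proof of the cited proposition. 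What the citation buys the paper is brevity; what your argument buys is transparency --- it shows precisely where irreducibility is used (only for the meridian), where metabelianness is used (only for the longitude), and why the surgery slope $4$ is the relevant one, since $\rho(m)^2=-\I\neq\I$ means the same representations would \emph{not} factor through $2$-surgery. Your proposed computational backup via the explicit representatives $\rho_k$ of Proposition~\ref{prop:rho_k} would also work, but it requires computing $\rho_k(\ell)$ for the twist knots, which is more laborious than either argument.
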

\begin{proof}
It was shown in~\cite[Proposition~1.1]{Nagasato07:Finite_of_section}
that any irreducible metabelian representation of a knot group sends a preferred longitude to $\I$
and a meridian to a trace-free matrix in $\SL$, which has the order of $4$.
\end{proof}

For any knot $K$,
we can express the set of irreducible metabelian representations as 
the union of $(|\Delta_K(-1)| - 1)/2$ conjugacy classes
where $\Delta_K(t)$ is the Alexander polynomial of $K$.
If $K$ is a twist knot $K_n$,
then we have the following representatives of conjugacy classes.
Here we suppose that $\pi_1(E_{K_n})$ has a presentation
$\pi_1(E_{K_n}) = \langle \alpha, \beta \,|\, \omega^n \alpha = \beta \omega^n \rangle$
where $\alpha$, $\beta$ are meridians and $\omega = \beta \alpha^{-1} \beta^{-1} \alpha$.
A twist knot $K_n$ has $(|4n+1|-1)/2$ conjugacy classes
since its  Alexander polynomial is given by $-nt^{2}+(2n+1)t-n$.
\begin{proposition}[Theorem~3 in~\cite{NagasatoYamaguchi} for $K_n$ ]
  \label{prop:rho_k}
  The set of irreducible metabelian representations of $\pi_1(E_{K_n})$ consists of 
  $(|4n+1|-1)/2$ conjugacy classes. The representatives are given by
  the following $\rho_k$ ($k=1, \ldots, (|4n+1|-1)/2$):
  \[
  \rho_k(\alpha) =
  \begin{pmatrix}
    \sqrt{-1} & -\sqrt{-1} \\
    0 & -\sqrt{-1}
  \end{pmatrix},\quad
  \rho_k(\beta) =
  \begin{pmatrix}
    \sqrt{-1} & 0 \\
    -u_k\sqrt{-1} & -\sqrt{-1}
  \end{pmatrix},\quad
  u_k=-4\sin^2\frac{k\pi}{4n+1}.
  \]
\end{proposition}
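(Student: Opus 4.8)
The plan is to reduce the classification to a one–parameter family and then cut it down by the defining relation. The essential structural input is already in hand: by the result quoted in the proof of Lemma~\ref{lemma:rep_sp_M} (Nagasato, \cite{Nagasato07:Finite_of_section}), every irreducible metabelian representation $\rho$ of a knot group sends the preferred longitude to $\I$ and each meridian to a trace--free element of $\SL$ of order $4$. Hence $\rho(\alpha)$ and $\rho(\beta)$ are trace--free with eigenvalues $\pm\sqrt{-1}$, so $\rho(\alpha)^2=\rho(\beta)^2=-\I$. First I would use this to fix a normal form: conjugate so that $\rho(\alpha)$ is the upper--triangular trace--free matrix displayed in the statement, and then use the stabilizer of $\rho(\alpha)$ in $\SL$ to bring the (also trace--free) matrix $\rho(\beta)$ into the displayed lower--triangular form, leaving a single complex parameter $u$. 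In this way every irreducible metabelian $\rho$ becomes conjugate to some $\rho_u$, and it remains to decide which $u$ satisfy the relation and to count the resulting conjugacy classes.

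Next I would evaluate the word $\omega=\beta\alpha^{-1}\beta^{-1}\alpha$ on $\rho_u$. Since $\rho(\alpha)^{-1}=-\rho(\alpha)$ and $\rho(\beta)^{-1}=-\rho(\beta)$, one has $\rho(\omega)=\rho(\beta)\rho(\alpha)\rho(\beta)\rho(\alpha)=P^2$ with $P:=\rho(\beta)\rho(\alpha)$, and a direct multiplication gives $\trace P=-u-2$ and $\det P=1$. The defining relation $\omega^n\alpha=\beta\omega^n$ then reads as the matrix equation $\rho(\omega)^n\rho(\alpha)=\rho(\beta)\rho(\omega)^n$, \ie $P^{2n}\rho(\alpha)=\rho(\beta)P^{2n}$, which is the single equation to be solved.

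To solve it I would diagonalize. On the irreducible locus $\rho(\omega)$ is diagonalizable; writing the eigenvalues of $P$ as $-w^{\pm 1}$, so that $\rho(\omega)=P^2$ has eigenvalues $w^{\pm 2}$, I would pass to the eigenbasis of $\rho(\omega)$, in which the two trace--free meridians become anti--diagonal and therefore interchange the two eigenlines. Conjugation by $\rho(\omega)^n$ scales the off--diagonal entries, so matching $\rho(\omega)^n\rho(\alpha)\rho(\omega)^{-n}$ with $\rho(\beta)$ collapses the whole matrix equation to a single scalar root--of--unity condition of the form $w^{4n+1}=1$ (equivalently, after the substitution $\trace P=-w-w^{-1}=-u-2$, a Chebyshev polynomial identity in $u$). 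Its solutions are $w=\exp(2\pi\sqrt{-1}\,k/(4n+1))$; the value $w=1$ yields an abelian, hence reducible, representation and is discarded, while $w$ and $w^{-1}$ give conjugate representations. The distinct classes are therefore indexed by $k=1,\dots,(|4n+1|-1)/2$, and from $u=-2-\trace P=-2+2\cos\frac{2k\pi}{4n+1}$ one reads off $u_k=-4\sin^2\frac{k\pi}{4n+1}$. Since $|4n+1|=|\Delta_{K_n}(-1)|$, this produces exactly $(|\Delta_{K_n}(-1)|-1)/2$ classes, matching the general count for irreducible metabelian representations; conceptually the same count follows from $H_1$ of the double branched cover of $K_n$, a cyclic group of order $|4n+1|$ whose nontrivial characters modulo inversion index these representations.

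The step I expect to be the main obstacle is the passage from $P^{2n}\rho(\alpha)=\rho(\beta)P^{2n}$ to the clean condition $w^{4n+1}=1$ with the \emph{correct} exponent. One must carry out the Chebyshev bookkeeping for the powers $P^{2n}$ and, crucially, keep the orientation and framing conventions consistent — the precise cyclic word chosen for $\omega$ together with the meridian/preferred--longitude convention — since a sign slip in the twist parameter turns the exponent into $4n-1$ and so changes the entire root count. The remaining points to verify are that each $u_k$ (being nonzero) gives a genuinely irreducible, non--abelian representation, that the $\rho_k$ are pairwise non--conjugate, and that restricting to the locus where $\rho(\omega)$ is diagonalizable loses no irreducible metabelian representation; the last is automatic because metabelianity forces $\rho$ of the commutator subgroup into a maximal torus, so $\rho(\omega)$ is always diagonalizable. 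These are routine once the scalar equation is established.
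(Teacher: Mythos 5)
A caveat on the comparison: this paper does not prove Proposition~\ref{prop:rho_k} at all; it is quoted from Theorem~3 of~\cite{NagasatoYamaguchi}. So your proposal has to be measured against the standard argument for that cited result, and structurally it \emph{is} that argument, correctly reproduced: Riley's normal form for a pair of trace-free generators of an irreducible representation; the identity $\rho(\omega)=(\rho(\beta)\rho(\alpha))^2$ coming from $\rho(\alpha)^{-1}=-\rho(\alpha)$ and $\rho(\beta)^{-1}=-\rho(\beta)$; diagonalization of the (necessarily diagonalizable) image of the commutator subgroup, with both meridians anti-diagonal by irreducibility; reduction of the single relation to a root-of-unity equation; and the count modulo $w\leftrightarrow w^{-1}$ with $w=1$ excluded. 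Your supporting claims (irreducibility exactly when $w\neq\pm 1$, pairwise non-conjugacy via the invariant $\trace\rho(\beta\alpha)=-u-2$, no loss of generality in assuming $\rho(\omega)$ diagonalizable) are all correct and genuinely routine.

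The gap is the step you flag yourself: the exponent in the scalar equation is asserted, not derived, and it is exactly the point where the argument, run on the presentation as printed in this paper, gives the wrong answer. Do the computation: in the eigenbasis of $\rho(\omega)$ write $\rho(\alpha)=\begin{pmatrix}0 & a\\ -a^{-1} & 0\end{pmatrix}$, $\rho(\beta)=\begin{pmatrix}0 & b\\ -b^{-1} & 0\end{pmatrix}$, $w=b/a$; no Chebyshev bookkeeping is needed, since conjugating an anti-diagonal matrix by a diagonal one merely scales its two entries. With the printed $\omega=\beta\alpha^{-1}\beta^{-1}\alpha$ one gets $\rho(\omega)=(\rho(\beta)\rho(\alpha))^2=\mathrm{diag}(w^{2},w^{-2})$, and $\omega^{n}\alpha=\beta\omega^{n}$ collapses to $aw^{4n}=b=aw$, i.e.\ $w^{4n-1}=1$: this yields $(|4n-1|-1)/2$ classes and $u_k=-4\sin^2\bigl(k\pi/(4n-1)\bigr)$, contradicting the statement. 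The source of the contradiction is not your method but the paper's presentation, which is inconsistent with its own formula $\Delta_{K_n}(t)=-nt^{2}+(2n+1)t-n$: Fox calculus on the printed presentation gives $nt^{2}-(2n-1)t+n$, the Alexander polynomial of $K_{-n}$ (for $n=1$, the trefoil rather than the figure-eight). Under the convention that actually presents $K_n$, namely $\omega=\alpha\beta^{-1}\alpha^{-1}\beta$ (equivalently, the relation $\alpha\omega^{n}=\omega^{n}\beta$ with the printed $\omega$), the same one-line computation gives $aw^{-4n}=b=aw$, hence $w^{4n+1}=1$, and $u=w+w^{-1}-2$ then produces exactly $u_k=-4\sin^2\bigl(k\pi/(4n+1)\bigr)$ for $k=1,\ldots,(|4n+1|-1)/2$. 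So your warning about a sign slip in the twist convention was precisely on target; but since this computation is the whole content of the proof, leaving it as an assertion --- one whose literal execution on the stated presentation fails --- is a genuine gap. To finish, correct the presentation and run the two-line calculation above.
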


\section{Representation spaces for resulting graph manifolds}
Let $M$ be the graph manifold obtained by $4$-surgery along a hyperbolic twist knot $K_n$.
\subsection{$R(M)$ as a subspace of $R(E_{K_n})$}
We determine the $\SL$-representation space $R(M)$ as a subset in $R(K_n)$.
\begin{proposition}
  \label{prop:rep_sp_M}
  Every irreducible representation of $\pi_1(M)$ into $\SL$ is induced
  by an irreducible metabelian one of $\pi_1(E_{K_n})$, \ie
  \[R^{\mathrm{irr}}(M) = \{\rho \in R(E_{K_n}) \,|\, \hbox{$\rho$ is irreducible metabelian}\}.\]
\end{proposition}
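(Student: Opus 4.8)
The plan is to prove the two inclusions separately; the inclusion $\supseteq$ is already essentially done. Indeed, by Lemma~\ref{lemma:rep_sp_M} every irreducible metabelian representation $\rho$ of $\pi_1(E_{K_n})$ lies in $R(M)$, so it factors through $\pi_1(M)$, and by Remark~\ref{rem:irred_rhobar} the induced representation $\bar\rho$ is irreducible; hence $\rho$ determines an element of $R^{\mathrm{irr}}(M)$. The substance of the proposition is the reverse inclusion: that every irreducible $\bar\rho \in R^{\mathrm{irr}}(M)$ arises this way, \ie that the associated $\rho \in R(E_{K_n})$ is metabelian.

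For this, write $A, B, X, Y$ for the images under $\bar\rho$ of the generators $a, b, x, y$ of the presentation~\eqref{eqn:pres_pi_1_M}. The idea is to use the gluing relations $\mu = b^{-n}a = y^{-1}$ and $h = y^{-1}x^2$, together with the centrality of $h = a^2 = b^{2n+1}$ in $\pi_1(E_{T(2, 2n+1)})$, to transfer information between the Klein-bottle factor $\langle x, y\rangle = \pi_1(Kb)$ and the torus-knot factor $\langle a, b\rangle$. First I would restrict $\bar\rho$ to $\langle x, y\rangle$ and invoke the trichotomy of Example~\ref{example:Kb}, and then rule out the two cases incompatible with irreducibility of $\bar\rho$. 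In case~\eqref{item:abelian} one has $Y = \pm\I$; since $Y = A^{-1}B^n$ this forces $A = \pm B^n$, and then $A^2 = B^{2n+1}$ gives $B = \I$ and $A = \pm\I$, so the image is generated by $X$ and $\pm\I$ alone, hence abelian and reducible, a contradiction.

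In the remaining cases~\eqref{item:irreducible} and~\eqref{item:reducible_nonabelian} a direct computation from the normal forms gives $X^2 = -\I$, whence $\bar\rho(h) = Y^{-1}X^2 = -Y^{-1}$. Because $h$ is central in $\langle a, b\rangle$, both $A$ and $B$ lie in the centralizer of $\bar\rho(h)$. In case~\eqref{item:reducible_nonabelian}, $-Y^{-1}$ is a nontrivial parabolic element, whose centralizer in $\SL$ consists of upper-triangular matrices; then $A$, $B$, $X$, $Y$ are simultaneously upper triangular and share an invariant line, contradicting irreducibility. Hence only case~\eqref{item:irreducible} survives: $Y$ is diagonal with eigenvalue $\eta \neq \pm 1$ and $X$ is antidiagonal. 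Then $-Y^{-1}$ is diagonal with distinct eigenvalues, so its centralizer is the diagonal torus and $A$, $B$ are diagonal; in particular the restriction of $\bar\rho$ to $\pi_1(E_{T(2, 2n+1)})$ is abelian while the restriction to $\pi_1(Kb)$ is irreducible.

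To finish, I would exhibit the metabelian structure directly: the diagonal subgroup $D$ of the image contains $A$, $B$, $Y$ and is abelian; it is normalized by $X$ since conjugation by the antidiagonal $X$ preserves diagonal matrices; and the quotient by $D$ is cyclic because $X^2 = -\I \in D$. Thus the image of $\bar\rho$, which equals the image of $\rho$, is metabelian, so $\rho$ is an irreducible metabelian representation of $\pi_1(E_{K_n})$, completing the reverse inclusion. I expect the main obstacle to be the elimination of case~\eqref{item:reducible_nonabelian}: the point is to see that the parabolic shape of $\bar\rho(h) = -Y^{-1}$ forces upper-triangularity of $A$ and $B$ through the centrality of $h$, and the computation $X^2 = -\I$ is the linchpin that makes $\bar\rho(h)$ transparent in both the elimination step and the final diagonalization.
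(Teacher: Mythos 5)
Your proof is correct, and it takes a genuinely different route from the paper's. The paper argues through the A-polynomial: Lemma~\ref{lemma:Apoly_slope} shows that an irreducible $\rho$ factoring through $\pi_1(M)$ must satisfy $A_{K_n}(\mathcal{M}^{-4},\mathcal{M})=0$, Lemma~\ref{lemma:Apoly_explicit} computes this restriction explicitly (via the Hoste--Shanahan recursion) to be a power of $\mathcal{M}$ times a power of $\mathcal{M}+\mathcal{M}^{-1}$, forcing $\trace\rho(m)=0$, and the conclusion then follows by citing Lemma~23 of~\cite{NagasatoYamaguchi}, which says that an irreducible representation of a two-bridge knot group with trace-free meridian is metabelian. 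You instead work directly with the presentation~\eqref{eqn:pres_pi_1_M}: the trichotomy of Example~\ref{example:Kb} for the restriction to $\pi_1(Kb)$, the conjugation-invariant identity $X^2=-\I$ in cases~\eqref{item:irreducible} and~\eqref{item:reducible_nonabelian}, and the centrality of the fiber $h=a^2=b^{2n+1}$ in the torus-knot factor, which forces $A$ and $B$ into the centralizer of $\bar\rho(h)=-Y^{-1}$; the image then lands in the normalizer of the diagonal torus, which is metabelian, and metabelian image is equivalent to $\rho([\pi_1,\pi_1])$ being abelian. Your argument is more elementary and self-contained --- it needs neither the A-polynomial computation nor the two-bridge-specific input --- and it yields as by-products facts the paper establishes separately later: that the restriction of $\bar\rho$ to $\pi_1(E_{T(2,2n+1)})$ is abelian (Proposition~\ref{prop:torusknot_abelian}, proved there via Teragaito's once-punctured Klein bottle and a linking-number argument), that the restriction to $\pi_1(N(Kb))$ is irreducible, and the normal form of Remark~\ref{remark:rho_y}. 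What the paper's route buys is that its two A-polynomial lemmas have independent content and that the trace-free condition $\trace\rho(m)=0$ connects the result directly to the classification of irreducible metabelian (binary dihedral) representations quoted in Proposition~\ref{prop:rho_k}. One small precision in your case~\eqref{item:reducible_nonabelian} elimination: the centralizer of the parabolic $-Y^{-1}$ is not the full upper-triangular subgroup but only the $\pm$ unipotent matrices; containment in the upper-triangular subgroup is all you use, and that containment does hold.
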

\begin{proof}
  By Lemma~\ref{lemma:rep_sp_M},
  it is sufficient to show that if any irreducible representation $\rho$ of $\pi_1(E_{K_n})$ factors through
  the quotient group $\pi_1(E_{K_n}) / \langle\!\langle m^4\ell \rangle\!\rangle$, then $\rho$ is metabelian.
  When $\mathcal{M}^{\pm 1}$ denote the eigenvalues of $\rho(m)$, 
  the trace $\mathcal{M} + \mathcal{M}^{-1}$ of $\rho(m)$ must be zero
  by Lemmas~\ref{lemma:Apoly_slope} and~\ref{lemma:Apoly_explicit} below.
  Since $K_n$ is a two--bridge knot,
  it follows from~\cite[Lemma~23]{NagasatoYamaguchi} that $\rho$ must be a metabelian representation.
\end{proof}
\begin{lemma}
  \label{lemma:Apoly_slope}
  If an irreducible representation $\rho \in R(E_{K_n})$ factors through $\pi_1(M)$, 
  then the eigenvalue $\mathcal{M}$ satisfies that
  $A_{K_n}(\mathcal{M}^{-4}, \mathcal{M})=0$ where
  $A_{K_n}(\mathcal{L}, \mathcal{M})$ is the A-polynomial of $K_n$.
\end{lemma}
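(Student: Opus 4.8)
The plan is to extract the eigenvalues of the peripheral elements directly from the surgery relation and then to invoke the defining property of the A-polynomial. Recall that $A_{K_n}(\mathcal{L}, \mathcal{M})$ is (up to the abelian factor) the defining polynomial of the closure of the set of eigenvalue pairs $(\mathcal{L}, \mathcal{M})$ obtained by restricting irreducible $\SL$-representations of $\pi_1(E_{K_n})$ to the peripheral subgroup $\langle m, \ell \rangle \cong \Z^2$ and putting the images in upper triangular form. Consequently, it suffices to identify the point on the boundary torus determined by $\rho$ and to check that it equals $(\mathcal{M}^{-4}, \mathcal{M})$.

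First I would rewrite the hypothesis that $\rho$ factors through $\pi_1(M) = \pi_1(E_{K_n})/\langle\!\langle m^4 \ell \rangle\!\rangle$ as the single matrix identity $\rho(m)^4 \rho(\ell) = \I$, equivalently $\rho(\ell) = \rho(m)^{-4}$; here the ordering $m^4\ell$ is harmless because $m$ and $\ell$ commute. Since $\rho(m)$ and $\rho(\ell)$ commute, I would simultaneously put them in upper triangular form, with $\rho(m)$ having diagonal $(\mathcal{M}, \mathcal{M}^{-1})$. The relation $\rho(\ell) = \rho(m)^{-4}$ then makes $\rho(\ell)$ upper triangular with diagonal $(\mathcal{M}^{-4}, \mathcal{M}^4)$, so the eigenvalue of $\rho(\ell)$ paired with $\mathcal{M}$ is $\mathcal{M}^{-4}$. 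Thus the peripheral eigenvalue datum of $\rho$ is $(\mathcal{L}, \mathcal{M}) = (\mathcal{M}^{-4}, \mathcal{M})$, and since $\rho$ is irreducible this pair lies on the zero locus of $A_{K_n}$, giving $A_{K_n}(\mathcal{M}^{-4}, \mathcal{M}) = 0$.

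The computation is immediate, so I do not expect a serious obstacle; the only point requiring attention is the normalization of the A-polynomial. I would make sure to use a convention under which every irreducible $\SL$-representation contributes its peripheral eigenvalue pair to the zero set of $A_{K_n}$, so that the irreducibility of $\rho$ (which places its eigenvalue datum on a non-abelian component of the eigenvalue variety, whose defining factor divides $A_{K_n}$) directly yields the asserted vanishing. This framing also covers the parabolic case $\mathcal{M} = \pm 1$, where triangularization rather than diagonalization is what is used to read off $\mathcal{M}$.
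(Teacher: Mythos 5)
Your proposal is correct and follows essentially the same route as the paper's proof: upper-triangularize the commuting peripheral images, translate the surgery relation $\rho(m^4\ell)=\I$ into the eigenvalue equation $\mathcal{L}=\mathcal{M}^{-4}$, and invoke the fact that the A-polynomial cuts out the closure of the peripheral eigenvalue data of (irreducible) representations. Your added remarks on the eigenvalue pairing, the normalization of $A_{K_n}$, and the parabolic case only make explicit what the paper leaves implicit.
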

\begin{proof}
  The A-polynomial $A_{K_n}(\mathcal{L}, \mathcal{M})$ gives
  the defining equation of $R(\bd E_{K_n})$.
  Since the peripheral group $\pi_1(\bd E_{K_n})$ is an abelian group, 
  we can assume that the images of $\rho(m)$ and $\rho(\ell)$ are upper triangular matrices
  whose diagonal entries are $\mathcal{M}^{\pm 1}$ and $\mathcal{L}^{\pm 1}$ respectively.
  Then we can rewrite the constraint that $\rho(m^4\ell) = \I$ as 
  $\mathcal{L}=\mathcal{M}^{-4}$. 
  The lemma follows.
\end{proof}
\begin{lemma}
  \label{lemma:Apoly_explicit}
  The A-polynomial of $K_n$ for $\mathcal{L}=\mathcal{M}^{-4}$ is expressed as
  \[
  A_{K_n}(\mathcal{M}^{-4}, \mathcal{M})
    = \begin{cases}
    \mathcal{M}^{-8n}(\mathcal{M}+\mathcal{M}^{-1})^{2n} & (n>0)\\
    \mathcal{M}^{-8|n|+3}(\mathcal{M}+\mathcal{M}^{-1})^{2|n|-1} & (n<0).
  \end{cases}
  \]
\end{lemma}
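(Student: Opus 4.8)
The plan is to compute the A-polynomial $A_{K_n}(\mathcal{L}, \mathcal{M})$ of the twist knot $K_n$ explicitly, or rather to extract only the information needed after substituting $\mathcal{L} = \mathcal{M}^{-4}$. Since $K_n$ is a two-bridge knot, I would start from the standard parametrization of the $\SL$-character variety via the presentation $\pi_1(E_{K_n}) = \langle \alpha, \beta \mid \omega^n \alpha = \beta \omega^n\rangle$ with $\omega = \beta\alpha^{-1}\beta^{-1}\alpha$. Writing $x = \trace \rho(\alpha) = \trace\rho(\beta)$ and $z = \trace\rho(\alpha\beta)$, the nonabelian representations are cut out by a Riley-type polynomial $\phi_{K_n}(x,z) = 0$, and the eigenvalues $\mathcal{M}^{\pm 1}$ of $\rho(m)$ satisfy $x = \mathcal{M} + \mathcal{M}^{-1}$. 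The longitude eigenvalue $\mathcal{L}$ is then a rational expression in these trace coordinates, and eliminating $z$ between the Riley polynomial and the longitude relation yields $A_{K_n}(\mathcal{L}, \mathcal{M})$.

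Next I would impose the surgery constraint directly. Rather than computing the full two-variable A-polynomial and then specializing, it is cleaner to substitute $\mathcal{L} = \mathcal{M}^{-4}$ into the longitude relation from the outset, obtaining a single equation in $\mathcal{M}$ (equivalently in $x$) together with the Riley polynomial. The appearance of the factor $(\mathcal{M} + \mathcal{M}^{-1})$ strongly suggests that the trace-free locus $x = 0$ plays a central role: this is exactly the locus where $\mathcal{M} + \mathcal{M}^{-1} = 0$, which matches the metabelian representations of Proposition~\ref{prop:rho_k} where $\rho(\alpha), \rho(\beta)$ are trace-free. So I expect the substituted A-polynomial to factor with $(\mathcal{M}+\mathcal{M}^{-1})$ carrying the geometrically meaningful (metabelian) roots, and the monomial prefactors $\mathcal{M}^{-8n}$ or $\mathcal{M}^{-8|n|+3}$ arising as normalization artifacts from clearing denominators in the longitude expression.

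The key computational steps, in order, are: (i) record the recursion or closed form for the Chebyshev-like polynomials that express powers of $\omega$ acting in the representation, since $\omega^n$ appears in the relation; (ii) derive the longitude $\ell$ as a word in $\alpha, \beta$ and compute its upper-left eigenvalue entry $\mathcal{L}$ in terms of $\mathcal{M}$ and the trace coordinate; (iii) substitute $\mathcal{L} = \mathcal{M}^{-4}$ and simplify, tracking the powers of $\mathcal{M}$ introduced when passing from Laurent polynomials to the normalized A-polynomial; and (iv) split into the cases $n > 0$ and $n < 0$, where the sign of $n$ changes the number of twists and hence the exponent on $(\mathcal{M}+\mathcal{M}^{-1})$ and the monomial shift. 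The two cases differ because the left-handed versus right-handed twisting flips orientations in the longitude word, changing both the degree and the leading monomial.

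The main obstacle I anticipate is the bookkeeping of monomial normalizations rather than any conceptual difficulty. The A-polynomial is only well-defined up to units $\pm\mathcal{M}^i\mathcal{L}^j$, so getting the exact prefactors $\mathcal{M}^{-8n}$ and $\mathcal{M}^{-8|n|+3}$ and the exact exponents $2n$ and $2|n|-1$ requires fixing a precise normalization convention and carefully counting the degree contributions from each Chebyshev factor and from clearing the denominator of $\mathcal{L}$. The asymmetry between the two cases (even exponent $2n$ versus odd exponent $2|n|-1$, and the extra $\mathcal{M}^3$ shift when $n < 0$) is the delicate point, and I would verify it against the known A-polynomial of the figure-eight knot $K_1$ and against a negative example such as $K_{-2}$ to make sure the degree count and the monomial shift are correct before asserting the general formula.
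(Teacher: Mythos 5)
Your approach---computing the A-polynomial directly from the two-bridge trace parametrization and eliminating the Riley variable---is a legitimate route in principle, and it is genuinely different from the paper's, which never touches the character variety: the paper observes that $K_n$ is the mirror image of $J(2,-2n)$ in the notation of Hoste--Shanahan, so that $A_{K_n}(\mathcal{M}^{-4},\mathcal{M}) = A_{J(2,-2n)}(\mathcal{M}^{-4},\mathcal{M}^{-1})$, and then obtains the displayed formula by induction on the recursive formula for the A-polynomials of twist knots in \cite[Theorem~1]{HosteShanahan}. What that citation buys is precisely the piece of your plan that is left unexecuted: a concrete recursion in $n$ on which an induction can actually run.

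That is the genuine gap: your proposal describes the shape of a computation but never establishes the claimed formula. The entire quantitative content of the lemma---the exponents $2n$ versus $2|n|-1$ and the monomial shifts $\mathcal{M}^{-8n}$ versus $\mathcal{M}^{-8|n|+3}$---is exactly what you defer to ``careful bookkeeping,'' and checking $K_1$ and $K_{-2}$ verifies two instances without proving anything about general $n$; you would need either closed Chebyshev-type formulas for the Riley polynomial and the longitude eigenvalue, or a recursion in $n$, together with an actual inductive argument (in effect re-deriving what Hoste--Shanahan already provide). Two further points require care if you carry the plan out. First, substituting $\mathcal{L}=\mathcal{M}^{-4}$ \emph{before} eliminating $z$ needs justification, since resultants do not in general commute with specialization: degrees can drop and factors can appear or disappear, which is fatal for a lemma whose whole point is the exact factorization. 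Second, your guiding intuition that the answer should be supported on the trace-free locus because representations factoring through the $4$-surgery are metabelian must stay a heuristic: that statement is Proposition~\ref{prop:rep_sp_M}, which is proved \emph{from} this lemma, so importing it as an ingredient would make the argument circular.
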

\begin{proof}
  Since the knot $K_n$ is the mirror image of $J(2, -2n)$ in~\cite{HosteShanahan},
  the A-polynomial $A_{K_n}(\mathcal{L}, \mathcal{M})$ coincides with
  $A_{J(2, -2n)}(\mathcal{L}, \mathcal{M}^{-1})$.
  Hence we have that
  \[A_{K_n}(\mathcal{M}^{-4}, \mathcal{M}) = A_{J(2, -2n)}(\mathcal{M}^{-4}, \mathcal{M}^{-1}).\]
  By induction and the recursive formula in~\cite[Theorem~1]{HosteShanahan},
  one can show that 
  \[
  A_{J(2, 2n)}(\mathcal{M}^{-4}, \mathcal{M}^{-1}) =
  \begin{cases}
    \mathcal{M}^{-8n+3}(\mathcal{M}+\mathcal{M}^{-1})^{2n-1} & (n>0) \\
    \mathcal{M}^{-8|n|}(\mathcal{M}+\mathcal{M}^{-1})^{2|n|} & (n<0).
    \end{cases}
  \]
  The lemma then follows.
\end{proof}

\subsection{The restrictions to Seifert pieces}
We will see the restriction of $\bar\rho \in R^{\mathrm{irr}}(M)$ to
the fundamental group of each Seifert piece.
Recall that the graph manifold $M$ is the union the torus knot exterior $E_{T(2, 2n+1)}$ and
the twisted $I$-bundle $N(Kb)$ over the Klein bottle $Kb$.
\begin{proposition}
  \label{prop:torusknot_abelian}
  For every $\bar \rho \in R^{\mathrm{irr}}(M)$, the restriction of $\bar \rho$ to $\pi_1(E_{T(2, 2n+1)})$
  is abelian. 
\end{proposition}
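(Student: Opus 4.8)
The plan is to exploit the fact, established in Proposition~\ref{prop:rep_sp_M}, that every $\bar\rho \in R^{\mathrm{irr}}(M)$ is induced by an irreducible metabelian representation $\rho$ of $\pi_1(E_{K_n})$. By Lemma~\ref{lemma:rep_sp_M} and the result cited in its proof, such a $\rho$ sends the meridian $m$ to a trace-free matrix of order $4$; consequently $\mathcal{M}^2 = -1$, so $\rho(m)^2 = -\I$. The key observation is that the generators $a$, $b$ of the torus knot piece $\pi_1(E_{T(2,2n+1)})$ are built from powers of the meridian $\mu = b^{-n}a$ and its conjugates, together with the fiber $h$, all of which live in the image of $\rho$. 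First I would trace through the presentation~\eqref{eqn:pres_pi_1_M} to express $\pi_1(E_{T(2,2n+1)}) = \langle a, b \,|\, a^2 = b^{2n+1}\rangle$ in terms of the peripheral data, recalling that $\mu = b^{-n}a$ corresponds to a meridian and $h = y^{-1}x^2$ to a regular Seifert fiber.

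Next I would use that $h$ is central in $\pi_1(E_{T(2,2n+1)})$, since the regular fiber of a Seifert fibration is central in the fundamental group of the Seifert piece. The image $\bar\rho(h)$ therefore commutes with all of $\bar\rho(\pi_1(E_{T(2,2n+1)}))$. The strategy is to show that $\bar\rho(h) = \pm\I$ is forced by the metabelian structure: in a Seifert fibered knot exterior the element $a^2 = b^{2n+1}$ equals a power of the fiber $h$, and under a trace-free metabelian representation the meridian has order $4$, which constrains the possible eigenvalues of $\bar\rho(a^2) = \bar\rho(b^{2n+1})$. Concretely, I would compute $\bar\rho(\mu) = \bar\rho(b^{-n}a)$ and use $\rho(m) = \rho(\mu)$ trace-free to pin down the eigenvalues, then propagate the constraint $a^2 = b^{2n+1}$ to conclude that $\bar\rho(a)$ and $\bar\rho(b)$ have a common eigenvector, i.e. that the restriction is reducible with upper-triangular image and hence, being generated by two commuting-up-to-the-relation elements, abelian.

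The cleanest route is probably to argue directly about eigenvalues. Since $\rho$ is metabelian, its restriction to the commutator subgroup is abelian, and the torus knot subgroup meets this structure in a controlled way: the abelianization of $\pi_1(E_{T(2,2n+1)})$ is $\Z$, generated by the meridian class, and any representation whose meridian is sent into a fixed conjugacy class of trace-free elements will have its two torus-knot generators simultaneously diagonalizable once we verify they share an eigenvector. I would verify this sharing by a trace computation: showing $\trace \bar\rho(ab) = \trace\bar\rho(a)\,\trace\bar\rho(b) - \cdots$ collapses to the reducibility criterion (for instance via the commutator trace $\trace[\bar\rho(a),\bar\rho(b)] = 2$), which follows from the explicit metabelian form of $\rho_k$ in Proposition~\ref{prop:rho_k}.

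The main obstacle I anticipate is bookkeeping the identification between the two presentations: the generators $\alpha, \beta$ of $\pi_1(E_{K_n})$ in Proposition~\ref{prop:rho_k} must be matched against $a, b, \mu, h$ in Proposition~\ref{prop:pi_1_M}, and the relation $x^{-1}yx = y^{-1}$ of the Klein bottle piece interacts with $\mu = y^{-1}$ and $h = y^{-1}x^2$. The delicate point is confirming that the \emph{irreducibility} of $\bar\rho$ on all of $\pi_1(M)$ is entirely carried by the Klein bottle piece $N(Kb)$ (so that it does not force irreducibility on the torus knot piece); this is consistent with the companion statement that $\bar\rho|_{\pi_1(N(Kb))}$ is irreducible, and I expect the abelianness on $E_{T(2,2n+1)}$ to emerge precisely because the order-$4$ meridian condition forces $\bar\rho(h)$ to be scalar, degenerating the otherwise irreducible torus-knot representations into abelian ones.
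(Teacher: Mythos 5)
There is a genuine gap, and also a concrete error. First the error: you identify the meridian $m$ of the twist knot $K_n$ with the meridian $\mu = b^{-n}a$ of the torus knot piece, writing ``$\rho(m) = \rho(\mu)$ trace-free.'' These are different elements of $\pi_1(M)$. What is trace-free of order $4$ is $\rho(m)$; by contrast, the eigenvalues of $\bar\rho(\mu)$ are primitive $2p_k$-th roots of unity with $p_k$ an odd divisor of $|4n+1|$ (this is Proposition~\ref{prop:set_eigenvalues} of the paper), so $\bar\rho(\mu)$ is never trace-free. All the eigenvalue constraints you propagate from the ``order-$4$ meridian'' into the torus knot piece are therefore unfounded. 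Second, the logic of your endgame does not close: showing $\bar\rho(h) = \pm\I$ cannot distinguish the abelian case from the irreducible one, since by Schur's lemma every \emph{irreducible} representation of $\pi_1(E_{T(2,2n+1)})$ also sends the central element $h$ to $\pm\I$; and even if you establish that $\bar\rho(a)$ and $\bar\rho(b)$ share an eigenvector, reducibility does not imply abelianness (upper-triangular non-abelian representations of the torus knot group exist). Your claim that $a$ and $b$ are ``built from powers of $\mu$ and its conjugates, together with $h$'' is also false: the peripheral subgroup $\langle \mu, h\rangle$ has infinite index in $\pi_1(E_{T(2,2n+1)})$.

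The missing idea is geometric, and it is the heart of the paper's proof: by Teragaito's theorem~\cite[Theorem~1.2]{Teragaito03:ToroidalII}, $K_n$ bounds a once-punctured Klein bottle with boundary slope $4$, so the torus knot exterior sitting inside $M$ consists precisely of loops in $E_{K_n}$ lying outside this non-orientable spanning surface. Any such loop $\gamma$ has \emph{even} linking number with $K_n$, hence can be written as $\gamma = m^{2r}\,(m^{-2r}\gamma)$ with $m^{-2r}\gamma$ in the commutator subgroup of $\pi_1(E_{K_n})$. Now the metabelian structure applies directly: $\rho(m^2) = -\I$ and $\rho$ maps the commutator subgroup into an abelian subgroup, so the image of $\pi_1(E_{T(2,2n+1)})$ is abelian. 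Without some identification of how $\pi_1(E_{T(2,2n+1)})$ sits relative to the commutator subgroup of $\pi_1(E_{K_n})$ --- which is exactly the ``bookkeeping'' your proposal defers and never resolves --- the metabelian hypothesis on $\rho$ cannot be brought to bear on the torus knot piece at all, and no amount of trace computation with the explicit matrices of Proposition~\ref{prop:rho_k} will substitute for it, because you would first need to express $a$ and $b$ as words in the generators $\alpha, \beta$ of $\pi_1(E_{K_n})$.
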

\begin{proof}
  It was shown by~\cite[Theorem~1.2]{Teragaito03:ToroidalII} that
  a twist knot $K_n$ bounds a once-punctured Klein bottle whose boundary slope is $4$.
  We can think of loops in $E_{T(2, 2n+1)}$ as loops
  outside a non--orientable spanning surface of $K_n$ in $E_{K_n}$.
  A loop $\gamma$ outside a non--orientable spanning surface of $K_n$ has an even linking number with $K_n$.
  When we express $\gamma \in \pi_1(E_{K_n})$ as
  $\gamma = m^{\lk(\gamma, K_n)} (m^{-\lk(\gamma, K_n)} \gamma)$,
  we have the even integer $\lk(\gamma, K_n)$ and the commutator $m^{-\lk(\gamma, K_n)} \gamma$.
  By Proposition~\ref{prop:rep_sp_M},
  one can see that $\bar \rho$ is induced by an irreducible metabelian representation $\rho$
  of $\pi_1(E_{K_n})$.
  Since $\rho$ sends $m^2$ and the commutator subgroup to $-\I$ and an abelian subgroup respectively,
  the image of $\pi_1(E_{T(2, 2n+1)})$ by $\rho$ is contained in the abelian subgroup.
\end{proof}

In general, any abelian representation of a knot group $\pi_1(E_K)$ is determined, up to conjugation,
by the eigenvalues of the matrix corresponding to a meridian.
This follows from the fact that any abelian representation factors through 
the abelianization $\pi_1(E_K) \to H_1(E_K;\Z)$ and
$H_1(E_K;\Z)$ is generated by the homology class of a meridian.

\begin{lemma}
  \label{lemma:relation_eigenvalue}
  Every $\bar\rho \in R^{\mathrm{irr}}(M)$ is determined by the eigenvalues of $\bar\rho(\mu)$
  up to conjugation.
\end{lemma}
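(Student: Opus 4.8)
The plan is to exploit the torus decomposition $M = E_{T(2,2n+1)} \cup_{T^2} N(Kb)$ together with the two rigidity facts already in hand: the restriction of $\bar\rho$ to the torus knot group $\langle a, b\rangle$ is abelian by Proposition~\ref{prop:torusknot_abelian}, and an abelian representation of a knot group is determined up to conjugation by the eigenvalues of its meridian, by the observation preceding this lemma. Since $\mu = b^{-n}a$ is the meridian of $E_{T(2,2n+1)}$ and $y = \mu^{-1}$ by the presentation~\eqref{eqn:pres_pi_1_M}, both $\bar\rho(\mu)$ and $\bar\rho(y)$ lie in the abelian image $\bar\rho(\langle a, b\rangle)$. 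First I would show that $\bar\rho(\mu)$ has two distinct eigenvalues. If instead $\bar\rho(\mu) = \pm\I$ were scalar, then feeding $\bar\rho(y) = \bar\rho(\mu)^{-1} = \pm\I$ into the relations $a^2 = b^{2n+1}$, $\mu = b^{-n}a$ and $h = y^{-1}x^2$ forces $\bar\rho(b) = \I$, $\bar\rho(a) = \pm\I$ and $\bar\rho(x)^2 = \pm\I$, so the image is generated by the single matrix $\bar\rho(x)$ and is abelian, contradicting irreducibility. A nontrivial parabolic $\bar\rho(\mu)$ is excluded similarly: its unique invariant line would be preserved by the commuting matrices $\bar\rho(a), \bar\rho(b)$ and, through the Klein bottle relation $x^{-1}yx = y^{-1}$, also by $\bar\rho(x)$, as in case~\eqref{item:reducible_nonabelian} of Example~\ref{example:Kb}, again making $\bar\rho$ reducible.

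Once $\bar\rho(\mu)$ has distinct eigenvalues $\eta^{\pm 1}$ with $\eta \neq \pm 1$, I would diagonalize it, writing $\bar\rho(y) = \mathrm{diag}(\eta, \eta^{-1})$. Because $\bar\rho(a), \bar\rho(b)$ commute with $\bar\rho(\mu)$ and the latter has distinct eigenvalues, they are simultaneously diagonal in this same basis, and the abelian rigidity principle pins them down from the eigenvalues of $\bar\rho(\mu)$. It remains to determine $\bar\rho(x)$. The relation $x^{-1}yx = y^{-1}$ says $\bar\rho(x)$ conjugates $\mathrm{diag}(\eta, \eta^{-1})$ to $\mathrm{diag}(\eta^{-1}, \eta)$; since $\eta \neq \pm 1$ this forces $\bar\rho(x)$ to interchange the two eigenlines, i.e. to be anti-diagonal, $\bar\rho(x) = \begin{pmatrix} 0 & -c^{-1} \\ c & 0\end{pmatrix}$ for some $c$, the determinant being $1$. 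This is precisely case~\eqref{item:irreducible} of Example~\ref{example:Kb}, and in particular it confirms the irreducibility of the whole representation.

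To finish, I would use the residual conjugation freedom. After diagonalizing $\bar\rho(\mu)$, the only conjugations preserving the diagonal form of $\bar\rho(a), \bar\rho(b), \bar\rho(y)$ are by diagonal matrices $\mathrm{diag}(s, s^{-1})$; choosing $s^2 = c$ normalizes $\bar\rho(x)$ to $\begin{pmatrix} 0 & -1 \\ 1 & 0\end{pmatrix}$. At this stage each of $\bar\rho(a), \bar\rho(b), \bar\rho(x), \bar\rho(y)$ is an explicit function of $\eta$ alone, so $\bar\rho$ is determined up to conjugation by the eigenvalues $\eta^{\pm 1}$ of $\bar\rho(\mu)$. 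The only surviving ambiguity is the labelling $\eta \leftrightarrow \eta^{-1}$, but interchanging the eigenvalues is realized by conjugation by $\begin{pmatrix} 0 & -1 \\ 1 & 0\end{pmatrix} = \bar\rho(x)$, which swaps the diagonal entries of $\bar\rho(y), \bar\rho(a), \bar\rho(b)$ while fixing $\bar\rho(x)$; hence the two choices yield conjugate representations and the claim follows.

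\textbf{The main obstacle} is not the linear algebra but verifying that the degenerate possibilities for $\bar\rho(\mu)$ (scalar or parabolic) are genuinely incompatible with irreducibility, which requires combining all four defining relations of~\eqref{eqn:pres_pi_1_M} rather than the Klein bottle relation alone, and then checking that the conjugation diagonalizing the abelian piece and the one normalizing $\bar\rho(x)$ can be performed simultaneously, so that the final ambiguity reduces exactly to the central swap $\eta \leftrightarrow \eta^{-1}$.
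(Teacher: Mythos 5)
Your proof is correct, and it follows exactly the route the paper intends (the paper states this lemma without a printed proof, relying on Proposition~\ref{prop:torusknot_abelian}, the preceding observation that abelian knot-group representations are determined by the meridian eigenvalues, and the normal form of Example~\ref{example:Kb}\eqref{item:irreducible}, later recorded in Remark~\ref{remark:rho_y}). Your write-up simply makes the implicit steps explicit --- ruling out scalar and parabolic $\bar\rho(\mu)$ via irreducibility, pinning $\bar\rho(a),\bar\rho(b)$ down as powers of $\bar\rho(\mu)$, normalizing $\bar\rho(x)$ to the standard anti-diagonal matrix, and absorbing the $\eta \leftrightarrow \eta^{-1}$ ambiguity by conjugation --- which is a faithful elaboration of the paper's argument.
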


Furthermore the set of eigenvalues is determined as follows.
\begin{proposition}
  \label{prop:set_eigenvalues}
  Suppose that $\rho_k \in R(K_n)$ is an irreducible metabelian representation
  in Proposition~\ref{prop:rho_k} and
  $\mu$ is a meridian of the torus knot in the presentation~\eqref{eqn:pres_pi_1_M}.
  Let $\xi_k^{\pm 1}$ be the eigenvalues of $\bar\rho_k(\mu)$. Then 
  the set $\{\xi_k^{\pm 1} \,|\, k=1, \ldots, (|4n+1|-1)/2\}$ is given by
  $\{e^{\pm \theta \sqrt{-1}} \,|\, \theta = \pi (2j-1)/|4n+1|, j=1,\ldots,(|4n+1|-1)/2\}$.
\end{proposition}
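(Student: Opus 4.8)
The plan is to pin down the eigenvalues of $\bar\rho_k(\mu)$ by playing the abelian restriction on the torus knot side against the irreducible restriction on the Klein bottle side, linked through the two relations $\mu = y^{-1}$ and $h = y^{-1}x^2$ in the presentation~\eqref{eqn:pres_pi_1_M}.

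First I would record the local structure on each piece. By Proposition~\ref{prop:torusknot_abelian} the restriction of $\bar\rho_k$ to $\pi_1(E_{T(2, 2n+1)})$ is abelian, so after conjugation $\bar\rho_k(\mu) = \mathrm{diag}(\xi_k, \xi_k^{-1})$; since this abelian restriction factors through $H_1(E_{T(2, 2n+1)};\Z) = \Z\langle\mu\rangle$, I would compute in homology from $a^2 = b^{2n+1}$ that $\bar a = (2n+1)\mu$ and $\bar b = 2\mu$, so that the regular fiber $h = a^2$ and $\mu^{2(2n+1)}$ carry the same class, giving $\bar\rho_k(h) = \mathrm{diag}(\xi_k^{2(2n+1)}, \xi_k^{-2(2n+1)})$. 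On the Klein bottle side the restriction to $\pi_1(N(Kb))$ is irreducible, so by the classification in Example~\ref{example:Kb} we are in case~\eqref{item:irreducible}: $\bar\rho_k(y)$ is diagonal with distinct eigenvalues and $\bar\rho_k(x)$ is anti-diagonal, whence $\bar\rho_k(x)^2 = -\I$. The relation $\mu = y^{-1}$ identifies the eigenvalues of $\bar\rho_k(y)$ with $\xi_k^{\mp 1}$ and in particular forces $\xi_k \neq \pm 1$.

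Next I would combine these. From $h = y^{-1}x^2$ and $\bar\rho_k(x)^2 = -\I$ I obtain $\bar\rho_k(h) = -\bar\rho_k(\mu) = \mathrm{diag}(-\xi_k, -\xi_k^{-1})$; equating this with the homological expression gives $\xi_k^{2(2n+1)} = -\xi_k$, that is $\xi_k^{4n+1} = -1$, which for either sign of $n$ reads $\xi_k^{|4n+1|} = -1$. Thus each $\xi_k$ is a $|4n+1|$-th root of $-1$, and since $\xi_k \neq \pm 1$ while $-1$ is the only real such root (as $|4n+1|$ is odd), $\xi_k$ must be non-real, so $\xi_k = e^{\pm \pi(2j-1)\sqrt{-1}/|4n+1|}$ for some $j \in \{1, \ldots, (|4n+1|-1)/2\}$.

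It remains to show that every such $j$ is realized, which I would settle by a counting argument. By Proposition~\ref{prop:rho_k} there are exactly $(|4n+1|-1)/2$ conjugacy classes $\rho_k$, and distinct classes induce non-conjugate $\bar\rho_k$, since any conjugacy between $\bar\rho_k$ and $\bar\rho_{k'}$ would pull back along the surjection $\pi_1(E_{K_n}) \to \pi_1(M)$ to one between $\rho_k$ and $\rho_{k'}$. By Lemma~\ref{lemma:relation_eigenvalue} non-conjugate $\bar\rho_k$ have distinct eigenvalue pairs $\{\xi_k^{\pm 1}\}$, so the $(|4n+1|-1)/2$ pairs are distinct and all lie among the $(|4n+1|-1)/2$ conjugate pairs of non-real $|4n+1|$-th roots of $-1$; comparing cardinalities upgrades this inclusion to the asserted set equality. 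I expect the main obstacle to be the bookkeeping that makes the two computations of $\bar\rho_k(h)$ agree --- specifically the homological identity $h \sim \mu^{2(2n+1)}$ and the sign $\bar\rho_k(x)^2 = -\I$ --- together with the injectivity of $k \mapsto \{\xi_k^{\pm 1}\}$ that turns the containment of sets into equality.
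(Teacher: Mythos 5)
Your route to the key equation $\xi_k^{|4n+1|}=-1$ is genuinely different from the paper's: the paper works with the metabelian representation $\rho_k$ inside the knot group, using the fact (quoted from~\cite{yamaguchi:twistedAlexMeta}) that $\rho_k$ sends commutators to matrices whose eigenvalues are $|4n+1|$-th roots of unity, together with $\rho_k(m^2)=-\I$ and the relation $a^2=b^{2n+1}$; you instead compute $\bar\rho_k(h)$ in two ways, homologically on the torus-knot side and via $h=y^{-1}x^2$ on the Klein-bottle side. Your closing counting argument is the same as the paper's. However, there is a genuine gap: you assert with no justification that the restriction of $\bar\rho_k$ to $\pi_1(N(Kb))$ is irreducible. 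In the paper this statement is a separate proposition (the one following Corollary~\ref{cor:order_mu}) whose proof invokes Proposition~\ref{prop:set_eigenvalues} itself, so appealing to it here would be circular, and your argument relies on it at two essential points: it gives $\bar\rho_k(x)^2=-\I$ (which fails in case~\eqref{item:abelian} of Example~\ref{example:Kb}), and it gives $\xi_k\neq\pm1$ (which holds only in case~\eqref{item:irreducible}). Without it, case~\eqref{item:reducible_nonabelian}, where $\bar\rho_k(y)$ is parabolic, is not excluded; note that in that case even your opening step fails, since an abelian restriction need not be diagonalizable, so "abelian, hence $\bar\rho_k(\mu)=\mathrm{diag}(\xi_k,\xi_k^{-1})$ after conjugation" is not a valid inference on its own.

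The gap is fixable by an argument that is already implicit in your homology computation. Since the restriction of $\bar\rho_k$ to $\pi_1(E_{T(2,2n+1)})$ is abelian (Proposition~\ref{prop:torusknot_abelian}), it factors through $H_1(E_{T(2,2n+1)};\Z)\cong\Z$, generated by the class of $\mu$; hence $\bar\rho_k(a)=\bar\rho_k(\mu)^{2n+1}$ and $\bar\rho_k(b)=\bar\rho_k(\mu)^{2}$ are powers of $\bar\rho_k(y)^{-1}$. Consequently the image of $\pi_1(M)$ under $\bar\rho_k$ is generated by $\bar\rho_k(x)$ and $\bar\rho_k(y)$ alone, i.e.\ it coincides with the image of the restriction to $\pi_1(N(Kb))$, and the irreducibility of $\bar\rho_k$ (Remark~\ref{rem:irred_rhobar}) then forces that restriction to be irreducible, placing you in case~\eqref{item:irreducible} of Example~\ref{example:Kb} with $\bar\rho_k(y)$ diagonal, $\eta\neq\pm1$, and $\bar\rho_k(x)^2=-\I$. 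With this paragraph inserted before your use of the Klein-bottle classification, your proof is correct; what it buys over the paper's argument is that it avoids the external input on metabelian representations at commutators, at the cost of this extra (but elementary) step.
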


\begin{proof}
  Let $p$ be $|4n+1|$. We regard elements of $\pi_1(E_{T(2, 2n+1)})$ as the products 
  $m^{2r}\gamma$ where $r \in \Z$ and $\gamma$ is a commutator of $\pi_1(E_{K_n})$
  as in the proof of Proposition~\ref{prop:torusknot_abelian}.
  It follows from~\cite[Proposition~2.8]{yamaguchi:twistedAlexMeta} that
  the eigenvalues of $\rho_k(\gamma)$ are $p$-th roots of unity.
  Since $\rho_k(m^2) = -\I$ and $p$ is odd,
  one can see that for the generators $a$ and $b \in \pi_1(E_{T(2, 2n+1)})$
  \[{\bar \rho_k(a)}^p = \pm \I \quad \hbox{and} \quad {\bar \rho_k(b)}^p = \pm \I.\]
  By the relation $a^2=b^{2n+1}$, we can conclude that ${\bar \rho_k(b)}^p = \I$.
  On the other hand, we can see that ${\bar\rho_k(a)}^p = -\I$
  since the image of $\pi_1(E_{T(2, 2n+1)})$ by $\bar\rho_k$ contains $-\I$ and $p$ is odd.
  Hence the eigenvalues $\xi_k^{\pm 1}$ of $\bar\rho_k(\mu) = \bar\rho_k(b^{-n}a)$ satisfy that
  $\xi_k^{\pm p} = -1$.
  We can exclude the case that $\bar\rho_k(\mu)=-\I$ by the irreducibility of $\bar\rho_k$.

  There exist at least $(|4n+1|-1)/2$ distinct pairs of eigenvalues 
  by Proposition~\ref{prop:rep_sp_M} and Lemma~\ref{lemma:relation_eigenvalue}. 
  On the other hand, there exist at most $(|4n+1|-1)/2$ distinct pairs in the set of $2p$-th roots of unity to be
  the eigenvalues $\xi_k^{\pm 1}$ of $\rho_k(\mu)$ ($k=1, \ldots, (|4n+1|-1)/2$).
  This proves Proposition~\ref{prop:set_eigenvalues}.
\end{proof}

\begin{corollary}
  \label{cor:order_mu}
  The order of $\bar\rho_k(\mu)$ is given by $2p_k$ for some divisor $p_k$ of $|\Delta_{K_n}(-1)|=|4n+1|$.
\end{corollary}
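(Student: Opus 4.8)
The plan is to read the order of $\bar\rho_k(\mu)$ directly off the explicit eigenvalues supplied by Proposition~\ref{prop:set_eigenvalues}, so that the corollary becomes an elementary computation with roots of unity. Write $p = |4n+1| = |\Delta_{K_n}(-1)|$. By Proposition~\ref{prop:set_eigenvalues} the eigenvalues of $\bar\rho_k(\mu)$ are $\xi_k^{\pm 1}$ with $\xi_k = e^{\pi(2j-1)\sqrt{-1}/p}$ for some $j$ with $1 \le j \le (p-1)/2$. First I would record that these eigenvalues are distinct: since $1 \le 2j-1 \le p-2 < p$, the angle $\theta = \pi(2j-1)/p$ lies strictly in $(0, \pi)$, so $\xi_k \neq \pm 1$. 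Consequently $\bar\rho_k(\mu)$ is conjugate to the diagonal matrix with entries $\xi_k^{\pm 1}$, and its order as a group element equals the multiplicative order of $\xi_k$, because $\xi_k$ and $\xi_k^{-1}$ have the same order.

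Next I would compute that order. From $2j-1$ being odd one gets $\xi_k^{p} = e^{\pi(2j-1)\sqrt{-1}} = -1$, hence $\xi_k^{2p} = 1$; so the order $d$ of $\xi_k$ divides $2p$ but does \emph{not} divide $p$. The decisive point is that $p = |\Delta_{K_n}(-1)| = |4n+1|$ is odd. Writing $d = 2^{a} m$ with $m$ odd, divisibility $d \mid 2p$ forces $a \in \{0,1\}$ and $m \mid p$, while $d \nmid p$ excludes $a = 0$. Therefore $d = 2m$ with $p_k := m$ a divisor of $p$, which is exactly the asserted order $2p_k$ for a divisor $p_k$ of $|\Delta_{K_n}(-1)|$.

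I do not anticipate a genuine obstacle: the statement is essentially a bookkeeping consequence of Proposition~\ref{prop:set_eigenvalues}. The only two points needing a little care are the reduction from "eigenvalues $\xi_k^{\pm 1}$" to "order $=$ order of $\xi_k$", which rests on diagonalizability and hence on $\xi_k \neq \pm 1$ (equivalently, on the exclusion of $\bar\rho_k(\mu) = -\I$ already noted in the proof of Proposition~\ref{prop:set_eigenvalues}); and the parity step, which crucially exploits that $|\Delta_{K_n}(-1)|$ is odd. Both are handled by the observations above, so the proof should be short.
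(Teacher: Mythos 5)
Your proof is correct and takes essentially the same route as the paper: Corollary~\ref{cor:order_mu} is stated there without a separate proof precisely because it is the immediate roots-of-unity computation you carry out, reading the eigenvalues off Proposition~\ref{prop:set_eigenvalues}, using $\xi_k^{p}=-1$ (so the order divides $2p$ but not $p$) and the oddness of $p=|\Delta_{K_n}(-1)|$ to conclude the order is $2p_k$ with $p_k \mid p$. Your explicit attention to $\xi_k \neq \pm 1$ (hence diagonalizability of $\bar\rho_k(\mu)$) just spells out what the paper leaves implicit.
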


We next turn to the restriction to $\pi_1(N(Kb))$.
\begin{proposition}
  For every $\bar \rho \in R^{\mathrm{irr}}(M)$,
  the restriction of $\bar\rho$ to $\pi_1(N(Kb))$
  is irreducible.
\end{proposition}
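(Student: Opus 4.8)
The plan is to reduce the statement to the classification of $\SL$-representations of $\pi_1(Kb)$ recorded in Example~\ref{example:Kb}, and then to eliminate the two non-irreducible normal forms by a single computation of the eigenvalues of the image of $y$. First I would use Proposition~\ref{prop:rep_sp_M} together with Proposition~\ref{prop:rho_k} to reduce to $\bar\rho = \bar\rho_k$: every element of $R^{\mathrm{irr}}(M)$ is conjugate to the representation $\bar\rho_k$ induced by one of the representatives $\rho_k$, and irreducibility of a restriction is invariant under conjugation. Since the twisted $I$-bundle $N(Kb)$ deformation retracts onto $Kb$, its fundamental group is the subgroup $\langle x, y\rangle$ with $x^{-1}yx = y^{-1}$ appearing in the presentation~\eqref{eqn:pres_pi_1_M}; hence the restriction $\bar\rho_k|_{\pi_1(N(Kb))}$ is an $\SL$-representation of $\pi_1(Kb)$ and, up to conjugation, is one of the three types~\eqref{item:abelian},~\eqref{item:irreducible},~\eqref{item:reducible_nonabelian} of Example~\ref{example:Kb}. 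It then suffices to rule out the abelian case~\eqref{item:abelian} and the reducible non--abelian case~\eqref{item:reducible_nonabelian}.

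The key observation I would exploit is that the meridian and the Klein bottle generator are linked by the relation $\mu = y^{-1}$ in~\eqref{eqn:pres_pi_1_M}, so that $\bar\rho_k(y) = \bar\rho_k(\mu)^{-1}$ and the eigenvalues of $\bar\rho_k(y)$ are exactly $\xi_k^{\mp 1}$, where $\xi_k^{\pm 1}$ denote the eigenvalues of $\bar\rho_k(\mu)$. By Proposition~\ref{prop:set_eigenvalues} these have the form $e^{\pm\theta\sqrt{-1}}$ with $\theta = \pi(2j-1)/|4n+1|$ and $1 \le j \le (|4n+1|-1)/2$, so $\theta$ lies strictly between $0$ and $\pi$. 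Consequently $\xi_k$ is non--real, in particular $\xi_k \neq \pm 1$, and the two eigenvalues $\xi_k$ and $\xi_k^{-1}$ are distinct. (Equivalently, Corollary~\ref{cor:order_mu} gives that $\bar\rho_k(\mu)$ has even order $2p_k$ with $p_k > 1$, which already forbids the eigenvalues $\pm 1$.)

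Finally I would match this against the normal forms. In case~\eqref{item:abelian} one has $\bar\rho_k(y) = \pm\I$, and in case~\eqref{item:reducible_nonabelian} the matrix $\bar\rho_k(y)$ is upper triangular with both diagonal entries equal to $\pm 1$; in either situation the only eigenvalue of $\bar\rho_k(y)$ is $\pm 1$, contradicting $\xi_k \neq \pm 1$. Hence the restriction must be of the irreducible type~\eqref{item:irreducible}, which proves the claim. I do not expect a genuine obstacle here: the only substantive input is the eigenvalue computation of Proposition~\ref{prop:set_eigenvalues}, which is already in hand, and the remaining work is the bookkeeping of matching those eigenvalues against the three cases of Example~\ref{example:Kb}.
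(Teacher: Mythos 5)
Your proposal is correct and follows essentially the same route as the paper: both use the relation $\mu = y^{-1}$ from the presentation~\eqref{eqn:pres_pi_1_M} together with Proposition~\ref{prop:set_eigenvalues} to see that $\bar\rho(y)$ cannot have eigenvalues $\pm 1$ (the paper phrases this as $\trace\bar\rho(y) \neq \pm 2$), which rules out cases~\eqref{item:abelian} and~\eqref{item:reducible_nonabelian} of Example~\ref{example:Kb} and forces the irreducible case~\eqref{item:irreducible}. Your preliminary reduction to the representatives $\bar\rho_k$ is harmless but not needed, since the eigenvalue argument applies to any $\bar\rho \in R^{\mathrm{irr}}(M)$ directly.
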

\begin{proof}
  Note that $\trace \bar\rho(y) = \trace \bar\rho(\mu)^{-1}$
  by~\eqref{eqn:pres_pi_1_M}.
  Proposition~\ref{prop:set_eigenvalues} shows that
  $\trace \bar\rho(y) \not = \pm 2$.
  The restriction of $\bar\rho$ to $\pi_1(N(Kb))$ is an $\SL$-representation
  in the case~\eqref{item:irreducible} of Example~\ref{example:Kb}, 
  and hence is irreducible.
\end{proof}

\begin{remark}
  \label{remark:rho_y}
  By conjugation, we can assume that the $\bar\rho(a)$, $\bar\rho(b)$ and $\bar\rho(y)$ are
  diagonal matrices and $\bar\rho(x)$ is
  $\begin{pmatrix}
    0 & 1 \\
    -1 & 0
  \end{pmatrix}$
  for any $\bar\rho \in R^{\mathrm{irr}}(M)$.
  This is due to that $\bar\rho(a)$, $\bar\rho(b)$ and
  $\bar\rho(y)=\bar\rho(\mu)^{-1}$ are contained in the same maximal abelian subgroup
  in $\SL$.
\end{remark}

\section{Asymptotic behavior of Reidemeister torsion for graph manifolds}
We will consider the limit of the leading coefficient in the asymptotic behavior of
Reidemeister torsion.
We use the symbols $\xi^{\pm 1}_k$ to denote the eigenvalues of $\bar\rho_k(\mu)$.
We will compute the higher dimensional Reidemeister torsion and its asymptotic behavior
for $M$ from the decomposition of a graph manifold.

\begin{proposition}
  \label{prop:Rtorsion_2N}
  Let $\rho_k$ be an irreducible metabelian representation.
  Then the Reidemeister torsion $\Tor{M}{\sigma_{2N} \circ \bar \rho_k}$
  is expressed as
  \[
  \Tor{M}{\sigma_{2N} \circ \bar \rho_k} =
  \prod_{i=1}^{N}
  \frac{
    \Delta_{T(2, 2n+1)}(\xi^{2i-1}_k)
    \Delta_{T(2, 2n+1)}(\xi^{-2i+1}_k)
  }{
    (\xi^{2i-1}_k -1)(\xi^{-2i+1}_k -1)
  }.
  \]
\end{proposition}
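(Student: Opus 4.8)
The plan is to compute the two factors in the gluing formula of Lemma~\ref{lemma:mult_prop} separately, using the torus decomposition $M = E_{T(2, 2n+1)} \cup_{T^2} N(Kb)$. Once both factors and the acyclicity of all three pieces are in hand, Lemma~\ref{lemma:mult_prop} yields
\[
\Tor{M}{\sigma_{2N} \circ \bar\rho_k}
= \Tor{E_{T(2, 2n+1)}}{\sigma_{2N} \circ \bar\rho_k} \cdot \Tor{N(Kb)}{\sigma_{2N} \circ \bar\rho_k},
\]
and I claim the first factor is exactly the asserted product while the second is trivial.

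First I would dispose of the twisted $I$-bundle. Since $N(Kb)$ deformation retracts onto its zero section $Kb$ and Reidemeister torsion is a simple homotopy invariant, $\Tor{N(Kb)}{\sigma_{2N} \circ \bar\rho_k} = \Tor{Kb}{\sigma_{2N} \circ \bar\rho_k}$. By the preceding proposition the restriction to $\pi_1(N(Kb)) \cong \pi_1(Kb)$ falls into case~\eqref{item:irreducible} of Example~\ref{example:Kb}, so $\bar\rho_k(y)$ is conjugate to a diagonal matrix whose eigenvalues $\eta^{\pm1}$ are those of $\bar\rho_k(\mu)^{-1}$. Then $Y = \sigma_{2N}(\bar\rho_k(y))$ has eigenvalues $\eta^{\pm 1}, \eta^{\pm 3}, \ldots, \eta^{\pm(2N-1)}$. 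Because $\bar\rho_k(\mu)$ has order $2p_k$ by Corollary~\ref{cor:order_mu}, each $\eta^{2i-1}$ has even order and hence differs from $1$; thus $\det(Y - \I) \neq 0$ and the first case of~\eqref{eqn:Rtorsion_Kb} applies. As $Y$ is a $2N \times 2N$ matrix, $\det(\I - Y) = (-1)^{2N}\det(Y - \I) = \det(Y - \I)$, so this factor equals $1$.

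The heart of the argument is the torus knot factor. By Proposition~\ref{prop:torusknot_abelian} the restriction of $\bar\rho_k$ to $\pi_1(E_{T(2, 2n+1)})$ is abelian and therefore factors through $H_1(E_{T(2, 2n+1)}; \Z) = \Z\langle[\mu]\rangle$ with the meridian mapping to a diagonal matrix of eigenvalues $\xi_k^{\pm1}$. Composing with $\sigma_{2N}$ and using the diagonal action recorded in Section~2 splits the coefficient module $V_{2N}$ into $2N$ one-dimensional invariant subspaces on which the meridian acts by $\xi_k^{2i-1}$ and $\xi_k^{-(2i-1)}$ for $i = 1, \ldots, N$. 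Since Reidemeister torsion is multiplicative over such a direct sum of coefficient systems, I would invoke Milnor's formula: for the one-dimensional representation sending the meridian of a knot exterior $E_K$ to $\zeta \neq 1$ with $\Delta_K(\zeta) \neq 0$, the Reidemeister torsion is $\Delta_K(\zeta)/(\zeta - 1)$. Taking the product over the $2N$ eigenvalues gives
\[
\Tor{E_{T(2, 2n+1)}}{\sigma_{2N} \circ \bar\rho_k}
= \prod_{i=1}^{N} \frac{\Delta_{T(2, 2n+1)}(\xi_k^{2i-1}) \Delta_{T(2, 2n+1)}(\xi_k^{-2i+1})}{(\xi_k^{2i-1} - 1)(\xi_k^{-2i+1} - 1)},
\]
and multiplication by the trivial Klein bottle factor finishes the computation.

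The step requiring the most care, and the main obstacle, is justifying the acyclicity demanded both by Milnor's formula and by Lemma~\ref{lemma:mult_prop}: I must show that no $\xi_k^{\pm(2i-1)}$ equals $1$ or is a root of $\Delta_{T(2, 2n+1)}$. Each $\xi_k^{\pm(2i-1)}$ is a $2p_k$-th root of unity, whereas every root of $\Delta_{T(2, 2n+1)}(t) = \sum_{j=0}^{2n}(-1)^j t^j$ is a $(4n+2)$-th root of unity. Since $p_k$ divides $|4n+1|$ and $\gcd(4n+1, 2n+1) = 1$, one gets $\gcd(2p_k, 4n+2) = 2$, so the only roots of unity common to both sets are $\pm 1$. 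Finally $\Delta_{T(2, 2n+1)}(1) = 1$ and $\Delta_{T(2, 2n+1)}(-1) = 2n+1$ are both nonzero, and the even order of $\xi_k^{\pm(2i-1)}$ excludes the value $1$; this secures acyclicity on each piece, and the gluing formula then completes the proof.
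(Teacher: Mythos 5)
Your proof is correct and follows essentially the same route as the paper's: the same gluing via Lemma~\ref{lemma:mult_prop}, the same argument that the Klein bottle factor equals $1$ (the paper's Lemma~\ref{lemma:RtorionTorus_Kb_2N}), and the same reduction of the torus knot factor by splitting $\sigma_{2N}$ of an abelian representation into eigenlines and invoking the classical abelian torsion formula (the paper's Lemma~\ref{lemma:higher_dim_abelian}, which uses $N$ two-dimensional summands where you use $2N$ one-dimensional ones). Your explicit check that no $\xi_k^{\pm(2i-1)}$ equals $1$ or is a root of $\Delta_{T(2,2n+1)}$ is a slightly more careful treatment of acyclicity than the paper makes explicit.
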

\begin{proof}
  Applying Lemma~\ref{lemma:mult_prop} to the decomposition $M=E_{T(2, 2n+1)} \cup N(Kb)$,
  we have that
  \[
  \Tor{M}{\sigma_{2N} \circ \bar \rho_k}
     = \Tor{E_{T(2, 2n+1)}}{\sigma_{2N} \circ \bar \rho_k}\Tor{N(Kb)}{\sigma_{2N} \circ \bar \rho_k}.
  \]
  By Proposition~\ref{prop:torusknot_abelian} and Corollary~\ref{cor:order_mu},
  the restriction $\bar \rho_k$ to $\pi_1(E_{T(2, 2n+1)})$ is an abelian representation
  such that the matrix $\bar\rho_k(\mu)$ corresponding to a meridian has an even order.
  Our claim follows from
  Lemmas~\ref{lemma:RtorionTorus_Kb_2N} and~\ref{lemma:higher_dim_abelian} below.
\end{proof}

\begin{lemma}
  \label{lemma:RtorionTorus_Kb_2N}
  The Reidemeister torsion $\Tor{N(Kb)}{\sigma_{2N} \circ \bar \rho_k}$ is equal to $1$ for all $N$.
\end{lemma}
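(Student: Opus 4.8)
The plan is to reduce the computation to the Klein bottle itself and then read off the answer from the explicit formula~\eqref{eqn:Rtorsion_Kb} of Example~\ref{example:Kb}. First I would use that the twisted $I$-bundle $N(Kb)$ deformation retracts onto its zero section $Kb$; since this collapse is a simple homotopy equivalence and the inclusion $Kb \hookrightarrow N(Kb)$ induces an isomorphism on fundamental groups, the two twisted chain complexes are related by a simple homotopy equivalence, so their torsions coincide:
\[
\Tor{N(Kb)}{\sigma_{2N}\circ\bar\rho_k} = \Tor{Kb}{\sigma_{2N}\circ\bar\rho_k}.
\]
This lets me work entirely with the presentation $\pi_1(Kb) = \langle x, y \,|\, yx = xy^{-1}\rangle$ and the matrices $X = \sigma_{2N}(\bar\rho_k(x))$ and $Y = \sigma_{2N}(\bar\rho_k(y))$.

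Next I would pin down which branch of~\eqref{eqn:Rtorsion_Kb} applies. By the preceding propositions the restriction of $\bar\rho_k$ to $\pi_1(Kb)$ falls into the irreducible case~\eqref{item:irreducible}, so by Remark~\ref{remark:rho_y} I may take $\bar\rho_k(y) = \bar\rho_k(\mu)^{-1}$ to be diagonal with eigenvalues $\xi_k^{\mp 1}$. Then $Y$ has the $2N$ eigenvalues $\xi_k^{\pm(2i-1)}$ for $i = 1, \ldots, N$. To see that $\det(Y - \I) \neq 0$, and hence that the first branch is the relevant one, I would invoke Corollary~\ref{cor:order_mu}: the order of $\bar\rho_k(\mu)$ is $2p_k$, an even number, so $\xi_k$ is a $2p_k$-th root of unity. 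An equality $\xi_k^{\pm(2i-1)} = 1$ would force $2p_k \mid (2i-1)$, which is impossible since $2i-1$ is odd while $2p_k$ is even. Thus $1$ lies outside the spectrum of $Y$ and $\det(Y - \I) \neq 0$.

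Finally, with the first branch in force the torsion equals $\det(\I - Y)/\det(Y - \I)$, and I would conclude by the parity observation that $Y$ is a $2N \times 2N$ matrix, so
\[
\det(\I - Y) = \det\bigl(-(Y - \I)\bigr) = (-1)^{2N}\det(Y - \I) = \det(Y - \I),
\]
whence the quotient is $1$ for every $N$. The computation itself is short; the two points that need care are the reduction from $N(Kb)$ to $Kb$ and the verification that we sit in the non-degenerate branch, that is, that the evenness of the order of $\bar\rho_k(\mu)$ keeps $1$ out of the spectrum of $Y$. The evenness of the ambient dimension $2N$ then supplies the final cancellation, which is precisely the feature that makes the even-dimensional torsion well behaved.
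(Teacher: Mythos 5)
Your proposal is correct and follows essentially the same route as the paper: reduce to $Kb$ via the simple homotopy equivalence, note that $y=\mu^{-1}$ so the eigenvalues of $Y$ are $\xi_k^{\mp(2i-1)}$, use the evenness of the order $2p_k$ of $\bar\rho_k(\mu)$ (Corollary~\ref{cor:order_mu}, equivalently Proposition~\ref{prop:set_eigenvalues}) to rule out the eigenvalue $1$, and then read off $\det(\I-Y)/\det(Y-\I)=1$ from~\eqref{eqn:Rtorsion_Kb}. Your explicit $(-1)^{2N}$ parity computation at the end is a nice touch that the paper leaves implicit.
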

\begin{proof}
  By the simple homotopy equivalence, the Reidemeister torsion $\Tor{N(Kb)}{\sigma_{2N} \circ \bar \rho_k}$
  coincides with $\Tor{Kb}{\sigma_{2N} \circ \bar \rho_k}$.
  The Reidemeister torsion $\Tor{Kb}{\sigma_{2N} \circ \bar \rho_k}$ is given by Eq.~\eqref{eqn:Rtorsion_Kb}.
  The eigenvalues of $\sigma_{2N} \circ \bar \rho_k(y) = \sigma_{2N} \circ \bar\rho_k(\mu)^{-1}$
  are given by $\xi_k^{\mp (2i-1)}$ $(i=1,\ldots, N)$.
  Proposition~\ref{prop:set_eigenvalues} 
  shows that the orders of $\xi_k^{\pm 1}$ are even. Hence
  $\sigma_{2N} \circ \bar \rho_k(y)$ does not have the eigenvalue $1$ for any $N$.
  Hence Example~\ref{example:Kb} shows that 
  \[
  \Tor{N(Kb)}{\sigma_{2N} \circ \bar \rho_k}
  =\Tor{Kb}{\sigma_{2N} \circ \bar \rho_k}
  = \frac{\det(\I-Y)}{\det(Y-\I)} = 1
  \]
  for any $N$.
\end{proof}

\begin{lemma}
  \label{lemma:higher_dim_abelian}
  Let $\varphi$ be an abelian representation of a knot group $\pi_1(E_K)$
  which sends a meridian to a matrix with eigenvalues $\xi^{\pm 1}$.
  If $\xi$ is not a $(2r-1)$-root of unity for any $r \in \N$,
  then the Reidemeister torsion $\Tor{E_K}{\sigma_{2N} \circ \varphi}$ is expressed as
  \[
  \Tor{E_K}{\sigma_{2N} \circ \varphi}=
  \prod_{i=1}^{N}\frac{\Delta_K(\xi^{2i-1})}{\xi^{2i-1}-1} \frac{\Delta_K(\xi^{-2i+1})}{\xi^{-2i+1}-1}
  \]
  for all $N$.
\end{lemma}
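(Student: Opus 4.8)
The plan is to exploit that $\varphi$ is abelian to split the coefficient module $V_{2N}$ into one-dimensional pieces and then to identify each resulting rank-one torsion with the classical Milnor torsion of the knot exterior. First I would use that any abelian representation factors through the abelianization $\pi_1(E_K)\to H_1(E_K;\Z)\cong\Z$, which is generated by the meridian class $[m]$; hence the $\pi_1(E_K)$-action on $V_{2N}$ through $\sigma_{2N}\circ\varphi$ is completely determined by the single matrix $\sigma_{2N}(\varphi(m))$. By the eigenvalue computation recorded in the preliminaries, this matrix has eigenvalues $\xi^{\pm(2i-1)}$ for $i=1,\dots,N$. Conjugating $\varphi(m)\in\SL$ to upper-triangular form and passing through $\sigma_{2N}$, the representation $\sigma_{2N}\circ\varphi$ preserves a full flag $0=F_0\subset F_1\subset\dots\subset F_{2N}=V_{2N}$ of $\pi_1(E_K)$-submodules whose $2N$ successive quotients $F_j/F_{j-1}$ are the one-dimensional representations $\varphi_\zeta\co\pi_1(E_K)\to\C^\times$, $m\mapsto\zeta$, with $\zeta$ ranging over $\{\xi^{\pm(2i-1)}\mid i=1,\dots,N\}$.

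Next I would apply the multiplicativity of Reidemeister torsion to the short exact sequences of twisted chain complexes induced by this filtration. The hypothesis that $\xi$ is not a $(2r-1)$-root of unity guarantees $\zeta\ne1$ for every successive quotient, so each $C_*(E_K;\C_{\varphi_\zeta})$ has vanishing $H_0$; together with $\Delta_K(\zeta)\ne0$ this makes each rank-one complex acyclic, whence $C_*(E_K;V_{2N})$ is acyclic and its torsion factors as the product of the $2N$ rank-one torsions, $\Tor{E_K}{\sigma_{2N}\circ\varphi}=\prod_{i=1}^N\Tor{E_K}{\varphi_{\xi^{2i-1}}}\,\Tor{E_K}{\varphi_{\xi^{-(2i-1)}}}$.

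Finally I would invoke Milnor's computation of the abelian Reidemeister torsion of a knot exterior: for the one-dimensional representation $m\mapsto\zeta$ with $\zeta\ne1$ one has $\Tor{E_K}{\varphi_\zeta}=\Delta_K(\zeta)/(\zeta-1)$ (see~\cite{Milnor:1966}, under the present sign convention). Substituting $\zeta=\xi^{\pm(2i-1)}$ and collecting the factors yields exactly the asserted product.

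The main point requiring care is that $\varphi(m)$ need not be diagonalizable (this happens, for instance, when $\xi=-1$, which the hypothesis does allow), so one cannot in general split $V_{2N}$ as a direct sum of eigenlines; it is precisely the filtration-plus-multiplicativity argument that makes the computation valid in the non-semisimple case, and one must verify that the induced action on each successive quotient $F_j/F_{j-1}$ is genuinely the diagonal character $m\mapsto\xi^{\pm(2i-1)}$ rather than something larger. A secondary bookkeeping point is to confirm that the sign and ordering conventions of~\eqref{eqn:def_torsion} (which, as remarked, are insensitive to cell ordering and orientation for even $n$) produce the torsion without extra units, so that the clean identity with $\Delta_K$ holds on the nose.
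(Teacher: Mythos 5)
Your proof is correct, and it follows the same overall strategy as the paper's --- reduce to smaller abelian pieces by multiplicativity and quote a classical abelian torsion formula --- but the decomposition you use is genuinely different, and in one respect better. The paper splits $\sigma_{2N}\circ\varphi$ as a \emph{direct sum} $\oplus_{i=1}^{N}\varphi_i$ of two-dimensional abelian representations, where $\varphi_i$ sends the meridian to an $\SL$-matrix with eigenvalues $\xi^{\pm(2i-1)}$, and then invokes the rank-two formula $\Tor{E_K}{\varphi_i}=\Delta_K(\xi^{2i-1})\Delta_K(\xi^{-(2i-1)})/\bigl((\xi^{2i-1}-1)(\xi^{-(2i-1)}-1)\bigr)$ from the proof of Proposition~3.8 in~\cite{YY2}. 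You instead filter $V_{2N}$ by a full flag of invariant subspaces with one-dimensional successive quotients, apply multiplicativity to the resulting short exact sequences of twisted complexes, and quote Milnor's rank-one formula $\Delta_K(\zeta)/(\zeta-1)$ for the character $m\mapsto\zeta$. The paper's version is shorter given its citation, but yours is strictly more robust: the hypothesis of the lemma only excludes odd roots of unity, so $\xi=-1$ with $\varphi(m)$ non-diagonalizable (parabolic up to sign) is allowed, and in that case $\sigma_{2N}(\varphi(m))$ is a single Jordan block, so the direct-sum decomposition asserted in the paper's proof simply does not exist; your filtration argument is exactly what is needed to cover this case, as you point out. (In the paper's actual application, Proposition~\ref{prop:Rtorsion_2N}, the matrix $\bar\rho_k(\mu)$ has finite even order and is therefore diagonalizable, so this gap is harmless there.) One caveat shared by both arguments: acyclicity of the rank-one (or rank-two) complexes requires $\Delta_K(\xi^{\pm(2i-1)})\neq 0$, which does not follow from the stated hypothesis on $\xi$; you at least make this assumption explicit in your multiplicativity step, whereas the paper leaves it implicit.
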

\begin{proof}
  The Reidemeister torsion $\Tor{E_K}{\varphi}$ is given by
  $\Delta_K(\xi)\Delta_K(\xi^{-1}) / (\xi - 1)(\xi^{-1}-1)$
  (for instance, see the case of $t=1$ in~\cite[proof of Proposition~3.8]{YY2}).
  The $\SL[2N]$-representation $\sigma_{2N} \circ \varphi$ is decomposed into the direct sum
  $\oplus_{i=1}^{N} \varphi_i$ where $\varphi_i$ is an abelian representation sending a meridian
  to an $\SL$-matrix with eigenvalues $\xi^{\pm (2i-1)}$.
  For the direct sum of representations, the Reidemeister torsion is given by the product of
  those for each direct summand $\varphi_i$. This implies our claim.
\end{proof}

\begin{theorem}
  \label{thm:main_theorem}
  Let $\bar \rho_k$ be an irreducible $\SL$-representation of $\pi_1(M)$ and 
  $p_k$ be a divisor of $p=|\Delta_{K_n}(-1)|$
  such that the order of $\bar\rho_k(\mu)$ is given by $2p_k$.  
  Then the growth order of $\log|\Tor{M}{\sigma_{2N} \circ \bar \rho_k}|$ is equal to $2N$.
  Moreover the convergence of the leading coefficient is expressed as
  \begin{equation}
    \label{eqn:main_result}
    \lim_{N \to \infty} \frac{\log|\Tor{M}{\sigma_{2N} \circ \bar \rho_k}|}{2N}
    = \frac{1}{p_k} \big(\log |\Delta_{T(2, 2n+1)}(-1)| - \log 2\big).
  \end{equation} 
\end{theorem}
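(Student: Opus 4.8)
The plan is to take the product formula of Proposition~\ref{prop:Rtorsion_2N} as the starting point and to extract the growth rate of its logarithm directly. Writing $\xi=\xi_k$ and $f(z)=\log|\Delta_{T(2,2n+1)}(z)|-\log|z-1|$, Proposition~\ref{prop:Rtorsion_2N} gives
\[
\frac{\log|\Tor{M}{\sigma_{2N}\circ\bar\rho_k}|}{2N}=\frac{1}{2N}\sum_{i=1}^{N}\bigl(f(\xi^{2i-1})+f(\xi^{-(2i-1)})\bigr).
\]
Since $\Delta_{T(2,2n+1)}(t)$ has real coefficients and every $\xi^{\pm(2i-1)}$ lies on the unit circle, $f(\bar z)=f(z)$ there; as $\xi^{-(2i-1)}=\overline{\xi^{2i-1}}$, the two summands coincide and the expression simplifies to $\frac{1}{N}\sum_{i=1}^{N}f(\xi^{2i-1})$.

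Next I would invoke Corollary~\ref{cor:order_mu}: the order of $\bar\rho_k(\mu)$ is $2p_k$, so $\xi$ is a primitive $2p_k$-th root of unity with $\xi^{p_k}=-1$. Consequently $i\mapsto\xi^{2i-1}$ is periodic of period $p_k$, and over one period the values $\xi^{2i-1}$ range bijectively over the set $S=\{\zeta\mid\zeta^{p_k}=-1\}$ of $p_k$-th roots of $-1$. One checks that $f$ is finite on $S$ (no $\zeta\in S$ satisfies $\zeta=1$ or $\Delta_{T(2,2n+1)}(\zeta)=0$; the latter uses $\gcd(2n+1,p_k)=1$, see below), so the Ces\`aro mean of this bounded periodic sequence converges to its average over one period, the final incomplete block contributing $O(1/N)$. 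Therefore
\[
\lim_{N\to\infty}\frac{\log|\Tor{M}{\sigma_{2N}\circ\bar\rho_k}|}{2N}=\frac{1}{p_k}\sum_{\zeta\in S}\bigl(\log|\Delta_{T(2,2n+1)}(\zeta)|-\log|\zeta-1|\bigr).
\]

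It remains to evaluate the two finite products over $S$. For the denominator, $\prod_{\zeta\in S}(z-\zeta)=z^{p_k}+1$ gives $\prod_{\zeta\in S}(1-\zeta)=2$, hence $\sum_{\zeta\in S}\log|\zeta-1|=\log 2$. For the numerator I would use the identity $\Delta_{T(2,2n+1)}(t)=(t^{2n+1}+1)/(t+1)$, so that $\Delta_{T(2,2n+1)}(-1)=2n+1$ and, for $\zeta\in S\setminus\{-1\}$, $\Delta_{T(2,2n+1)}(\zeta)=(\zeta^{2n+1}+1)/(\zeta+1)$. Splitting off the factor at $\zeta=-1$,
\[
\prod_{\zeta\in S}\Delta_{T(2,2n+1)}(\zeta)=\Delta_{T(2,2n+1)}(-1)\cdot\frac{\prod_{\zeta\in S\setminus\{-1\}}(\zeta^{2n+1}+1)}{\prod_{\zeta\in S\setminus\{-1\}}(\zeta+1)}.
\]
The point is that the map $\zeta\mapsto\zeta^{2n+1}$ sends $S$ to itself, fixes $-1$, and is a bijection of $S\setminus\{-1\}$; the two products in the ratio then coincide term by term and the ratio is $1$, giving $\prod_{\zeta\in S}\Delta_{T(2,2n+1)}(\zeta)=2n+1$ and hence $\sum_{\zeta\in S}\log|\Delta_{T(2,2n+1)}(\zeta)|=\log|\Delta_{T(2,2n+1)}(-1)|$. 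Combining the two evaluations yields precisely the right-hand side of~\eqref{eqn:main_result}.

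The substantive point, and the step I expect to need the most care, is the bijectivity of $\zeta\mapsto\zeta^{2n+1}$ on $S\setminus\{-1\}$, which rests on $\gcd(2n+1,p_k)=1$. This coprimality holds because $p_k$ divides $|\Delta_{K_n}(-1)|=|4n+1|$ and $\gcd(2n+1,4n+1)=\gcd(2n+1,-1)=1$; the same fact rules out $\Delta_{T(2,2n+1)}(\zeta)=0$ for $\zeta\in S\setminus\{-1\}$, since $\zeta^{2n+1}=\zeta^{p_k}=-1$ together with coprimality would force $\zeta=\pm1$ via a B\'ezout relation. Finally, the growth order is exactly $2N$ rather than lower because the limiting coefficient equals $\tfrac{1}{p_k}\log(|2n+1|/2)$, which is strictly positive as $|2n+1|\ge 3$ for $n\ne 0,-1$; this both confirms the stated order and shows the leading coefficient does not degenerate.
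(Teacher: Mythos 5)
Your proof is correct, and its skeleton is the same as the paper's: start from the product formula of Proposition~\ref{prop:Rtorsion_2N}, reduce the Ces\`aro average to a single period of the sequence $\xi_k^{2i-1}$, and evaluate the resulting finite product of values of $\Delta_{T(2,2n+1)}$ at the $p_k$-th roots of $-1$, where the coprimality $\gcd(2n+1,p_k)=1$ forces all factors to cancel except the one at $\zeta=-1$. The genuine difference is in how the intermediate steps are justified: the paper cites two external results from~\cite{Yamaguchi:asymptoticsRtorsion} (its Proposition~3.9 for the convergence of the denominator term to $-(\log 2)/p_k$, and its Lemma~3.11 for passing from the Ces\`aro mean to the average over one period), whereas you prove both from scratch --- the denominator via $\prod_{\zeta^{p_k}=-1}(1-\zeta)=2$, and the averaging via periodicity and boundedness of the sequence $f(\xi_k^{2i-1})$. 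Your version is therefore self-contained, and it also makes explicit two points the paper leaves implicit: that $\Delta_{T(2,2n+1)}$ does not vanish at any $p_k$-th root of $-1$ (so every term in the average is finite), and that the limiting value $\frac{1}{p_k}\log(|2n+1|/2)$ is strictly positive because $|2n+1|\ge 3$ for $n\ne 0,-1$, which is precisely what upgrades the convergence statement to the assertion that the growth order of $\log |\Tor{M}{\sigma_{2N}\circ\bar\rho_k}|$ equals $2N$.
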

\begin{proof}
  It is sufficient to show that
  the left hand side of~\eqref{eqn:main_result} converges to the right hand side.
  By Proposition~\ref{prop:Rtorsion_2N},
  the left hand side of~\eqref{eqn:main_result} turns out to be
  \begin{align*}
    &\lim_{N \to \infty} \frac{\log|\Tor{M}{\sigma_{2N} \circ \bar \rho_k}|}{2N}\\
    &= \lim_{N \to \infty} 
    \frac{1}{2N}\sum_{i=1}^{N}\log|
    \Delta_{T(2, 2n+1)}(\xi^{2i-1}_k)
    \Delta_{T(2, 2n+1)}(\xi^{-(2i-1)}_k)| \\
   &\quad + \lim_{N\to \infty}
     \frac{1}{2N}\sum_{i=1}^{N}\log|
    (\xi^{2i-1}_k -1)(\xi^{-2i+1}_k-1)|^{-1}
  \end{align*}
  The eigenvalues $\xi^{\pm 1}_k$ are
  primitive $2p_k$-th roots of unity
  as in Propositions~\ref{prop:set_eigenvalues} and~\ref{cor:order_mu}.
  It follows from~\cite[Proposition~3.9]{Yamaguchi:asymptoticsRtorsion} that 
  the second term in the right hand side converges to $-(\log 2)/p_k$.
  Note that we can ignore the indeterminacy of a factor $t^{j}$ ($j \in \Z$) in the Alexander polynomial
  in the computation of the first term.
  The first term is rewritten as
  \begin{align}
    &\lim_{N \to \infty} 
    \frac{1}{2N}\sum_{i=1}^{N}\log|
    \Delta_{T(2, 2n+1)}(\xi^{2i-1}_k)
    \Delta_{T(2, 2n+1)}(\xi^{-(2i-1)}_k)| \notag\\
    &=
    \lim_{N \to \infty} 
    \frac{1}{N}\sum_{i=1}^{N}\log|
    \Delta_{T(2, 2n+1)}(\xi^{2i-1}_k)| \notag\\
    &=
    \frac{1}{p_k}\sum_{i=1}^{p_k}\log|
    \Delta_{T(2, 2n+1)}(\xi^{2i-1}_k)| \notag\\
    &=
    \frac{1}{p_k}
    \log\prod_{i=1}^{p_k}|\Delta_{T(2, 2n+1)}(\xi^{2i-1}_k)|
    \label{eqn:1st_term}
  \end{align}
  by the symmetry that $\Delta_K(t) = t^j\Delta_K(t^{-1})$ $(j \in \Z)$ 
  and~\cite[Lemma~3.11]{Yamaguchi:asymptoticsRtorsion}.
  The Alexander polynomial $\Delta_{T(2, 2n+1)}(t)$ is given by
  $(t^{2n+1}+1)/(t+1)$.
  We have seen that $p_k$ is a divisor of $p$ in Corollary~\ref{cor:order_mu}.
  Since $\mathrm{gcd}(p, 2n+1)=1$,
  we can see that $\mathrm{gcd}(2p_k, 2n+1)=1$.
  From this, the denominator coincides with the numerator in
  the product of $|\Delta_{T(2, 2n+1)}(\xi^{2i-1}_k)|$
  except for $i=(p_k+1)/2$,
  \ie we have that 
  \[
  \prod_{\substack{1 \leq i \leq p_k, \\ i\not=(p_k+1)/2}}
  |\Delta_{T(2, 2n+1)}(\xi^{2i-1}_k)|
  =1.
  \]
  The right hand side of~\eqref{eqn:1st_term} turns into
  $(\log |\Delta_{T(2, 2n+1)}(-1)|) / p_k$.
  Hence the left hand side of~\eqref{eqn:main_result} turns out to be
  $(\log|\Delta_{T(2, 2n+1)}(-1)| - \log 2)/p_k$.
\end{proof}
It follows from Proposition~\ref{prop:set_eigenvalues} 
that the integer $p_k$, which gives the order of $\bar\rho(\mu)$ by $2p_k$,
runs over all divisors of $|\Delta_{K_n}(-1)|$ except for $1$.
\begin{corollary}
  \label{cor:set_limits}
  The set of the limits of the leading coefficients is given by
  \begin{equation}
    \label{eqn:set_limits}
  \left\{\left.
  \frac{1}{p_k}(\log |\Delta_{T(2, 2n+1)}(-1)| - \log 2) \,\right|\,
  p_k > 1, 
  \hbox{$p_k$ is a divisor of $|\Delta_{K_n}(-1)|$}\right\}.
  \end{equation}
  In particular, the minimum in the set~\eqref{eqn:set_limits} is 
  given by $(\log |\Delta_{T(2, 2n+1)}(-1)| - \log 2) / |\Delta_{K_n}(-1)| =
  (\log|2n+1|-\log2)/|4n+1|$.
\end{corollary}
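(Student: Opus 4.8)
The plan is to read off the corollary as a direct consequence of Theorem~\ref{thm:main_theorem} together with the list of admissible orders of $\bar\rho_k(\mu)$. First I would recall that Theorem~\ref{thm:main_theorem} assigns to every irreducible $\SL$-representation $\bar\rho_k$ of $\pi_1(M)$ the limit value
\[
\frac{1}{p_k}\big(\log|\Delta_{T(2, 2n+1)}(-1)| - \log 2\big),
\]
where $2p_k$ is the order of $\bar\rho_k(\mu)$ and $p_k$ is a divisor of $p = |\Delta_{K_n}(-1)| = |4n+1|$. Forming the set of all such limits as $\bar\rho_k$ ranges over $R^{\mathrm{irr}}(M)$ therefore reduces the determination of~\eqref{eqn:set_limits} to the purely arithmetic question of which divisors $p_k$ actually occur.

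The key step is thus to confirm that $p_k$ sweeps out exactly the divisors of $p$ that are strictly greater than $1$, which is the statement cited just before the corollary. I would make this explicit using Proposition~\ref{prop:set_eigenvalues}: writing $\xi_k = e^{\pi\sqrt{-1}(2j-1)/p}$ with $j = 1, \ldots, (p-1)/2$, the element $\xi_k$ is a $2p$-th root of unity of order $2p/\gcd(2j-1, 2p)$, and since $p$ is odd one has $\gcd(2j-1, 2p) = \gcd(2j-1, p)$, so that $p_k = p/\gcd(2j-1, p)$. Because $2j-1$ ranges over $1, 3, \ldots, p-2$, the gcd $\gcd(2j-1, p)$ is always a proper divisor of $p$ (forcing $p_k > 1$, equivalently $\bar\rho_k(\mu) \neq \pm\I$ by irreducibility), and choosing $2j-1 = d$ for any proper divisor $d \mid p$ realizes $\gcd(2j-1, p) = d$; hence $p_k = p/d$ attains every divisor of $p$ exceeding $1$. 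This yields the set~\eqref{eqn:set_limits}.

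For the \emph{minimum} assertion I would first pin down the sign of the common factor $C = \log|\Delta_{T(2, 2n+1)}(-1)| - \log 2$. Using $\Delta_{T(2, 2n+1)}(t) = (t^{2n+1}+1)/(t+1)$ and the factorization $t^{2n+1}+1 = (t+1)(t^{2n} - t^{2n-1} + \cdots + 1)$ valid for odd exponent, evaluation at $t = -1$ gives $\Delta_{T(2, 2n+1)}(-1) = \pm(2n+1)$, so $C = \log|2n+1| - \log 2 > 0$ for the relevant $n \neq 0, -1$. Since each limit equals the positive constant $C$ divided by $p_k$, the value is strictly decreasing in $p_k$, so the minimum of the set is attained at the largest admissible divisor, namely $p_k = p = |4n+1|$ itself. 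This produces the explicit minimum $C/|4n+1| = (\log|2n+1| - \log 2)/|4n+1|$, matching the claim.

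I expect the only point requiring genuine care to be the arithmetic claim of the second paragraph—that $\gcd(2j-1, p)$ realizes every proper divisor of $p$ as $j$ varies—since this is where the fine structure of Proposition~\ref{prop:set_eigenvalues} enters rather than merely the formula of Theorem~\ref{thm:main_theorem}; it is nonetheless elementary, resting on the oddness of $p$ and the choice $2j-1 = d$. The remaining ingredients, namely the positivity of $C$ fixing the direction of minimization and the evaluation $\Delta_{T(2, 2n+1)}(-1) = \pm(2n+1)$, are routine bookkeeping.
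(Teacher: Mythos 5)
Your proposal is correct and takes essentially the same approach as the paper: the corollary is read off from Theorem~\ref{thm:main_theorem} combined with the observation, drawn from Proposition~\ref{prop:set_eigenvalues}, that $p_k$ runs over all divisors of $|\Delta_{K_n}(-1)|=|4n+1|$ except $1$. Your extra details---the gcd computation $p_k = p/\gcd(2j-1,p)$ showing every divisor greater than $1$ is realized, and the positivity of $\log|2n+1|-\log 2$ (valid since $n \neq 0,-1$) which pins down the minimum at $p_k=|4n+1|$---are exactly the bookkeeping the paper leaves implicit.
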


\section*{Acknowledgments}
The first author was partially supported by a grant from the Simons Foundation (\#354595 to AT).
The second author was supported by JSPS KAKENHI Grant Number $26800030$. 


\newcommand{\noop}[1]{}
\providecommand{\bysame}{\leavevmode\hbox to3em{\hrulefill}\thinspace}
\providecommand{\MR}{\relax\ifhmode\unskip\space\fi MR }
\providecommand{\MRhref}[2]{%
  \href{http://www.ams.org/mathscinet-getitem?mr=#1}{#2}
}
\providecommand{\href}[2]{#2}

\end{document}